\numberwithin{equation}{section}
\newtheorem{thm}{Theorem}[section]
\newtheorem*{thmA}{Theorem A}
\newtheorem*{thmB}{Theorem B}
\newtheorem{cor}[thm]{Corollary}
\newtheorem*{cor*}{Corollary}
\newtheorem{lem}[thm]{Lemma}
\newtheorem*{con*}{Conjecture}
\newtheorem*{prob*}{Problem}
\theoremstyle{definition}
\newtheorem{defn}[thm]{Definition}
\theoremstyle{remark}
\newtheorem{rem}[thm]{Remark}
\renewcommand{\r}{\gamma}
\newcommand{\ds}{\displaystyle}
\newcommand{\bbN}{\mathbb{N}}
\begin{document}
\title{Topological Rigidity for FJ  by the infinite cyclic group}

\author[Kun Wang]{Kun Wang}
\address{Department of Mathematics,
Vanderbilt University,
1326 Stevenson Center,
Nashville, TN 37240}
\email{kun.wang@vanderbilt.edu}

\begin{abstract}

We call a group FJ if it satisfies the $K$- and $L$-theoretic Farrell-Jones conjecture 
with coefficients in $\mathbb Z$.  We show that if $G$ is FJ, then the simple Borel conjecture (in dimensions $\ge 5$) holds for every group of the form $G\rtimes\mathbb Z$.  If in addition $Wh(G\times \mathbb Z)=0$, which is true for all known torsion free FJ groups,  then the bordism Borel conjecture (in dimensions $n\ge 5$) holds for $G\rtimes\mathbb Z$.  
One of the key ingredients in proving these rigidity results is another main result,  which says that if a torsion free group $G$ satisfies the $L$-theoretic Farrell-Jones conjecture with coefficients in $\mathbb Z$, then any semi-direct product $G\rtimes\mathbb Z$ also satisfies the $L$-theoretic Farrell-Jones conjecture with coefficients in $\mathbb Z$. Our result is indeed more general and implies the $L$-theoretic Farrell-Jones conjecture with coefficients in additive categories is closed under extensions of torsion free groups. This enables us to extend the class of groups which satisfy the Novikov conjecture.

\end{abstract}

\maketitle
\section{Introduction}

In the area of manifold topology, one of the most intriguing unsolved problem is the Borel conjecture on topological rigidity of closed aspherical manifolds.  Recall that a manifold is called \textit{aspherical} if its universal cover is contractible.
\vskip 5pt
\noindent
\textit{The Borel Conjecture}. Every closed aspherical manifold is topologically rigid.  That is, every homotopy equivalence 
between any two closed aspherical manifolds is homotopic to a homeomorphism.
\vskip 5pt

Note that every homeomorphism between two closed manifolds is simple, i.e.  its Whitehead torsion vanishes. This follows from a theorem of 
Chapman \cite{Chap} which says that every homeomorphism between two finite $CW$-complexes is simple.   Therefore, it is reasonable to study the simple version of the Borel conjecture, which is the converse 
(up to homotopy) to Chapman's topological invariance theorem of Whitehead torsion for closed aspherical manifolds: 

\vskip 5pt
\noindent
\textit{The Simple Borel Conjecture.} Every closed aspherical manifold is simply  topologically rigid. That is, every simple homotopy equivalence between any two closed aspherical manifolds is homotopic to a homeomorphism.

\vskip 5pt
Clearly, the simple Borel conjecture is simpler than the Borel conjecture. The passage from the simple Borel conjecture to the Borel conjecture is the famous conjecture which states that the Whitehead group of any torsion free group vanishes (note that the fundamental group of a closed aspherical manifold is  torsion free).  

Another type of rigidity question one can ask for aspherical  manifold is the following bordism type rigidity:

\vskip 5pt
\noindent
\textit{The Bordism Borel Conjecture:} Every closed aspherical manifold is bordismly topologically  rigid. That is,  every homotopy equivalence $f: N\longrightarrow M$ from another closed manifold $N$ to $M$ is $h$-cobordant to the identity $1: M\longrightarrow M$. 

\vskip 5pt
Recall that a cobordism $(W; M_1, M_2)$ is called an \textit{$h$-corbordim} if the inclusions $M_1\hookrightarrow W,\ M_2\hookrightarrow W$ are homotopy equivalence. Two homotopy equivalences $f_i: M_i\longrightarrow X, i=1, 2$ are called \textit{h-cobordant} if there is  an $h$-cobordism $(W; M_1, M_2)$ and a homotopy equivalence $(F; f_1, f_2): (W; M_1, M_2)\longrightarrow (X\times[0, 1]; X\times{0},  X\times{1})$. Note that, if we require the map $f$ to be simple and $h$-cobordant to be $s$-cobordant in the bordism Borel conjecture, we then get the $s$-cobordism version of the Borel conjecture, which by the $s$-cobordism theorem,  is just the simple Borel conjecture in dimensions $\ge 5$.  

From the viewpoint of surgery theory, the simple version and the bordism version of the Borel conjecture are very natural, since they have direct connection to the surgery long exact sequences. We refer the reader to \cite{Luck2} for a survey on aspherical manifolds and \cite{FRR} for a discussion about the above three versions of the Borel conjecture.

For convenience, we introduce the following. We say the Borel conjecture, or the simple Borel conjecture, or the bordism Borel conjecture, holds for a group $G$ if either $G$ cannot be realized as the fundamental group of a closed aspherical manifold, or the respective conjecture holds for every closed aspherical manifold  with fundamental group isomorphic to $G$. It is an interesting problem to identify which groups  can be realized as the fundamental group of a closed aspherical manifold. Conjecturally, a group has this property if and only if the group is a finitely presented Poincar\'e duality group. This conjecture is usually referred to as  Wall's conjecture (though Wall \cite{Wall3} did not include the finitely presented condition, which would otherwise make the conjecture false). See \cite{DM} for a survey.

In recent years, there has been significant progress on the proof of the Borel conjecture for a large class of groups,  due to the solutions of the Farrell-Jones conjecture (FJC for short) for these groups.  See the works by many authors \cite{BFL},\cite{BL2},\cite{BLR2},\cite{BLRR},\cite{WC},\cite{WC1},\cite{KLR},\cite{GMR},\cite{FW1},\cite{FW2},\cite{FW3},\cite{RH}. 
Roughly speaking, the conjecture says that the algebraic $K$- and $L$-groups $K_n(\mathbb Z[G]), L_n(\mathbb Z[G]), n\in\mathbb Z$ of the integral group ring $\mathbb Z[G]$ of a group $G$ is determined 
by those of its virtually cyclic subgroups and the group homology of $G$. The conjecture was first formulated in \cite{fj} by Farrell and Jones with coefficients in $\mathbb Z$. Then in \cite{DL},
Davis and L\"uck gave a general framework for the formulations of various isomorphism conjectures in $K$-and $L$-theories. In these formulations, the coefficients are untwisted rings.  Later on, the conjecture was extended by Bartels and Reich \cite{BR} to allow for coefficients in any additive category $\mathcal A$ with a right $G$-action. The precise formulation of the conjecture will be given in Section \ref{section 2.1}. 

The significance of the Farrell-Jones conjecture not only lies in the fact that it provides 
a tool for the computations of algebraic $K$- and $L$-theories of groups rings, but also implies many 
important conjectures in geometry, topology and algebra. In particular, if both the $K$- and $L$-theoretic  
FJC with coefficients in $\mathbb Z$ hold for a group $G$, then the Borel conjecture holds for $G$ in dimensions greater than or equal to 5. If the $L$-theoretic FJC with coefficients in $\mathbb Z$ holds for $G$, then the Novikov conjecture on homotopy invariance of higher signatures holds for $G$. See \cite{LR} for a survey on the FJC and its applications. 

We call a group \textit{FJ} if it satisfies the $K$- and $L$-theoretic FJC with coefficients in $\mathbb Z$.  We denote the class of all FJ groups by $\mathcal {FJ}$.  In general, in application of the FJC to the Borel conjecture, one needs to show $G\in\mathcal{FJ}$ if one wants to prove the Borel conjecture for $G$ in dimensions $\ge 5$. Interestingly,  one of our main result shows the following:


\begin{thmA}\label{A} If $G\in\mathcal{FJ}$, then the simple Borel conjecture holds for every semi-direct product $G\rtimes\mathbb Z$ in dimensions  $\ge 5$. If in addition $Wh(G\times\mathbb Z)=0$, then the bordism Borel conjecture holds for every semi-direct product $G\rtimes\mathbb Z$ in dimensions $\ge 5$. 
\end{thmA}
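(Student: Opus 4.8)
Write $\Gamma=G\rtimes_{\alpha}\mathbb Z$. A Borel conjecture is vacuous for a group which cannot be realised as the fundamental group of a closed aspherical manifold, so I may assume $\Gamma$ — hence its subgroup $G$ — is torsion free, and I fix a closed aspherical manifold $M^{n}$, $n\ge 5$, with $\pi_{1}(M)\cong\Gamma$; since $M$ is aspherical, $M\simeq B\Gamma$. By the topological surgery exact sequence in its assembly formulation, the simple Borel conjecture for $M$ follows once the $L^{s}$-theory assembly map $H_{*}(B\Gamma;\mathbb L_{\mathbb Z})\to L^{s}_{*}(\mathbb Z\Gamma)$ is an isomorphism, and the bordism Borel conjecture follows once the analogous $h$-decorated statement holds (together with the usual realisation of Whitehead torsion by $h$-cobordisms, which is what lets one reach the non-simple homotopy equivalences). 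The plan is therefore in two parts: (i) produce the $L^{\langle-\infty\rangle}$-assembly isomorphism for $\Gamma$ out of the Farrell--Jones hypothesis on $G$; and (ii) run Bass--Heller--Swan bookkeeping for $\mathbb Z\Gamma$ to push the decoration from $\langle-\infty\rangle$ up to $s$, and, under the extra hypothesis, far enough to cover the $h$-case.

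Part (i) is the geometric heart of the matter. Since $G$ is torsion free and satisfies the $L$-theoretic FJC with $\mathbb Z$-coefficients, our main $L$-theoretic theorem shows that $\Gamma=G\rtimes\mathbb Z$ also satisfies the $L$-theoretic FJC with $\mathbb Z$-coefficients. As $\Gamma$ is torsion free its infinite virtually cyclic subgroups are all infinite cyclic and it has no infinite dihedral subgroup, so the relative assembly map from the trivial family to the family of virtually cyclic subgroups is an isomorphism in $L^{\langle-\infty\rangle}$-theory (twisted Shaneson splitting; no $\mathrm{UNil}$ term appears). Hence $H_{*}(B\Gamma;\mathbb L^{\langle-\infty\rangle}_{\mathbb Z})\xrightarrow{\ \cong\ }L^{\langle-\infty\rangle}_{*}(\mathbb Z\Gamma)$.

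Part (ii) is homological bookkeeping. Since $G\in\mathcal{FJ}$ is torsion free the $K$-theoretic FJC forces $Wh(G)=\rk(\mathbb Z G)=0$ and $K_{-i}(\mathbb Z G)=0$ for all $i\ge 1$. Inserting this into the twisted Bass--Heller--Swan decomposition of Farrell--Hsiang (and its extension to negative $K$-theory) for $\mathbb Z\Gamma$, viewed as the $\alpha$-twisted Laurent ring over $\mathbb Z G$, one finds that $Wh(\Gamma)\cong\wnil(\mathbb Z G,\alpha)\oplus\wnil(\mathbb Z G,\alpha^{-1})$ and that $\rk(\mathbb Z\Gamma)$, $K_{-i}(\mathbb Z\Gamma)$ are likewise sums of the two twisted Nil groups (in the appropriate degrees) attached to $\alpha$ and to $\alpha^{-1}$. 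The involution of $\mathbb Z\Gamma$ sends $t\mapsto t^{-1}$ and hence interchanges the two Nil summands in each of these groups, which makes the Tate cohomology $\widehat H^{*}(\mathbb Z/2;Wh(\Gamma))$, $\widehat H^{*}(\mathbb Z/2;\rk(\mathbb Z\Gamma))$, $\widehat H^{*}(\mathbb Z/2;K_{-i}(\mathbb Z\Gamma))$ vanish identically. Feeding this into the Rothenberg/decoration-change exact sequences — the source $H_{*}(B\Gamma;\mathbb L_{\mathbb Z})$ being decoration-independent, since $\mathbb Z$ is regular — promotes the isomorphism of part (i) to an isomorphism of $L^{s}$-assembly maps. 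The surgery exact sequence now gives $\mathcal S^{s}(M)=\{*\}$: every simple homotopy equivalence onto $M$ is homotopic to a homeomorphism. As $M$ was an arbitrary closed aspherical manifold of dimension $\ge 5$ with fundamental group $\Gamma$, the simple Borel conjecture holds for $G\rtimes\mathbb Z$.

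Finally suppose $Wh(G\times\mathbb Z)=0$ as well. By the untwisted Bass--Heller--Swan formula together with $Wh(G)=\rk(\mathbb Z G)=0$, this is exactly the vanishing $\nk_{1}(\mathbb Z G)=0$ of the Bass Nil group; this is the extra input controlling the Whitehead torsion of an arbitrary (not necessarily simple) homotopy equivalence $f:N\to M$, letting one modify $f$ within its $h$-cobordism class until the case already treated applies and $f$ becomes $h$-cobordant to $\mathrm{id}_{M}$ — the bordism Borel conjecture for $G\rtimes\mathbb Z$ in dimensions $\ge 5$. The genuinely hard step will be part (i), that the $L$-theoretic FJC is inherited under semidirect products with $\mathbb Z$, which we prove separately as the other main theorem; granting it, what remains for Theorem A is the bookkeeping of part (ii), where the delicate point is determining the action of the involution on the twisted Nil groups in order to conclude that the relevant Tate cohomology vanishes.
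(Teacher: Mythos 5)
Your part (i) and the surgery-exact-sequence step for the simple case are essentially the paper's argument: Theorem B plus torsion-freeness identifies the Farrell--Jones assembly map for $\Gamma=G\rtimes\mathbb Z$ with the classical assembly map over $B\Gamma$, and then $S^s(M)=\{*\}$ follows from the surgery sequence. Where you diverge is the decoration bookkeeping. The paper's Lemma \ref{l group} stays entirely inside $L$-theory: Ranicki's Wang-type sequence for the twisted Laurent extension in $L^s$ (whose intermediate decoration collapses to $L^h(\mathbb Z[G])$ because $Wh(G)=0$), the analogous sequence in $L^{\langle-\infty\rangle}$, and the five lemma give $L^s_*(\mathbb Z[\Gamma])\cong L^{\langle-\infty\rangle}_*(\mathbb Z[\Gamma])$ without ever computing $Wh(\Gamma)$ or any Tate cohomology; the $h$-decoration is then added by running the same argument for $G\times\mathbb Z$ and splitting via Shaneson, which is exactly where the hypothesis $Wh(G\times\mathbb Z)=0$ enters. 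Your route --- twisted Bass--Heller--Swan for $Wh(\Gamma)$, $\tilde K_0(\mathbb Z[\Gamma])$, $K_{-i}(\mathbb Z[\Gamma])$, the claim that the duality involution interchanges the two twisted Nil summands, and Rothenberg sequences --- is a genuinely different mechanism, and note that it proves \emph{more} than the statement: if the Tate cohomology of all these groups vanishes, then every decoration from $s$ down to $\langle-\infty\rangle$ on $L(\mathbb Z[\Gamma])$ agrees and the hypothesis $Wh(G\times\mathbb Z)=0$ becomes superfluous even for the bordism case. That the entire weight rests on the involution-swap claim for the \emph{twisted} Nil summands (in $Wh$, in $\tilde K_0$, and in every negative $K$-group) means it cannot be left as a remark: it must be proved or precisely cited, not deferred as ``the delicate point.''

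The genuine gap is your last paragraph. It is true that, given $Wh(G)=\tilde K_0(\mathbb Z[G])=0$, the hypothesis $Wh(G\times\mathbb Z)=0$ is equivalent to the vanishing of the untwisted Bass Nil group $NK_1(\mathbb Z[G])$; but the Whitehead torsion of an arbitrary homotopy equivalence $f:N\to M$ lies in $Wh(\Gamma)\cong\widetilde{\mathrm{Nil}}(\mathbb Z[G],\alpha)\oplus\widetilde{\mathrm{Nil}}(\mathbb Z[G],\alpha^{-1})$, and the untwisted $NK_1(\mathbb Z[G])$ controls neither twisted summand. ``Modifying $f$ within its $h$-cobordism class until the simple case applies'' is precisely the assertion to be proved, and you give no mechanism for it. The correct argument (and the paper's) is: the bordism Borel conjecture for $M$ is equivalent to the triviality of the $h$-decorated structure set $S^h(M)$, which follows once the $L^h$-assembly map is an isomorphism in the relevant degrees; so what is needed is the identification $L^h_*(\mathbb Z[\Gamma])\cong L^{\langle-\infty\rangle}_*(\mathbb Z[\Gamma])$ (in the paper this is the second half of Lemma \ref{l group}, obtained from $Wh(G\times\mathbb Z)=0$ via Shaneson splitting), followed by a rerun of the surgery sequence with the $h$-decoration. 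In your framework that identification would come from the same Rothenberg/Tate chain as in part (ii) --- again placing everything on the unproved swap claim --- and the $NK_1(\mathbb Z[G])$ discussion plays no role and should be removed.
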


An immediate corollary of Theorem A is the following:

\begin{cor}\label{cor1.1} Let $M$ be a closed aspherical manifold which fibers over the unit circle with fiber $N$.  If $\pi_1(N)\in\mathcal{FJ}$ and dim$(M)\ge 5$, then the simple Borel conjecture holds for $M$. If in addition $Wh(\pi_1(N)\times\mathbb Z)=0$, then the bordism Borel conjecture holds for $M$. 
\end{cor}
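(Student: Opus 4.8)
The plan is to reduce the corollary directly to Theorem A by recognizing $\pi_1(M)$ as a semi-direct product of the form $\pi_1(N)\rtimes\mathbb Z$.

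Since $M$ is a closed manifold that fibers over $S^1$, it is the mapping torus of a self-homeomorphism $\phi\colon N\to N$ of the (closed) fiber, and $N$ has dimension $\dim M-1$. The homotopy exact sequence of the bundle $N\hookrightarrow M\to S^1$, combined with $\pi_n(S^1)=0$ for $n\ge 2$ and the asphericity of $M$, shows that $\pi_n(N)=0$ for all $n\ge 2$, so $N$ is itself closed aspherical (although we will not actually need this). More importantly, the same exact sequence yields a short exact sequence of groups
\[
1\longrightarrow \pi_1(N)\longrightarrow \pi_1(M)\longrightarrow \mathbb Z\longrightarrow 1,
\]
and since $\mathbb Z$ is free this splits; concretely, a splitting is provided by the mapping torus structure, giving an isomorphism $\pi_1(M)\cong\pi_1(N)\rtimes_{\phi_*}\mathbb Z$. (If one allows a disconnected fiber, one first passes to the finite cover of $M$ on which the fiber becomes connected; this replaces $M$ by a closed aspherical manifold that still fibers over the circle with the same conclusion, so we may as well assume $N$ connected from the start.)

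Now apply Theorem A with $G=\pi_1(N)$. By hypothesis $\pi_1(N)\in\mathcal{FJ}$, so the simple Borel conjecture holds for $\pi_1(N)\rtimes\mathbb Z\cong\pi_1(M)$ in dimensions $\ge 5$. Since $M$ is a closed aspherical manifold with this fundamental group and $\dim M\ge 5$, $M$ is simply topologically rigid, which is the first assertion. If in addition $Wh(\pi_1(N)\times\mathbb Z)=0$, the second half of Theorem A gives the bordism Borel conjecture for $\pi_1(N)\rtimes\mathbb Z\cong\pi_1(M)$, hence for $M$. There is no real obstacle here: all of the substantive content is carried by Theorem A, and the only routine points are the mapping-torus description of manifolds fibering over the circle and the resulting semidirect-product decomposition of $\pi_1(M)$.
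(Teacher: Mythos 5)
Your argument is correct and is exactly the reduction the paper intends: the paper offers no separate proof, treating the corollary as immediate from Theorem A via the mapping-torus/fibration identification $\pi_1(M)\cong\pi_1(N)\rtimes_{\phi_*}\mathbb Z$, which is precisely what you carry out. The extra observations (asphericity of $N$, the disconnected-fiber caveat) are harmless but not needed.
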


Following \cite{BL2}, we define a class of groups $\mathcal B$ in the following way, except we add more groups to the list.

\begin{defn}\label{C} Let $\mathcal B$ be the smallest class of groups so that:
\begin{enumerate}
\item $\mathcal B$ contains all  groups from the following classes of groups: CAT(0) groups, Gromov hyperbolic groups,  lattices in virtually connected Lie groups,  virtually solvable groups, fundamental groups of graphs of abelian groups, $S$-arithmetic groups.
\item  $\mathcal B$ is closed under taking subgroups, finite direct products of groups, free products of groups, direct colimits (with not necessary injective structure maps) of groups.
\item For any group homomorphism $\phi: G\rightarrow H$. If  $H\in\mathcal B$ and $\phi^{-1}(V)\in\mathcal B$ for every virtually cyclic subgroup $V\subseteq H$, then $G\in\mathcal B$.
\end{enumerate}
\end{defn}

\begin{cor}\label{cor1.2} If $G\in\mathcal B$,  then the simple Borel conjecture and the bordism Borel conjecture holds for every semi-direct product $G\rtimes\mathbb Z$ in dimensions $\ge 5$. 

\end{cor}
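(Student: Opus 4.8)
The plan is to deduce Corollary~\ref{cor1.2} directly from Theorem~A. Theorem~A asks two things of $G$: that $G\in\mathcal{FJ}$, and, for the bordism statement, that $Wh(G\times\mathbb Z)=0$. So the only work is to check that membership in $\mathcal B$ supplies both of these; no further manifold topology is involved.

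First I would establish, by structural induction along Definition~\ref{C}, that every group in $\mathcal B$ satisfies the Farrell--Jones conjecture with coefficients in an arbitrary additive category; specializing the coefficient category to $\mathbb Z$ then yields $\mathcal B\subseteq\mathcal{FJ}$. The generating classes in~(1) satisfy this additive-coefficient version of the FJC by the works cited in the introduction (CAT(0) groups, Gromov hyperbolic groups, lattices in virtually connected Lie groups, virtually solvable groups, fundamental groups of graphs of abelian groups, $S$-arithmetic groups); the operations in~(2), namely passage to subgroups, finite direct products, free products, and directed colimits with arbitrary structure maps, are the standard inheritance properties of this conjecture; and~(3) is an instance of the Transitivity Principle. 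Since $\mathbb Z$ itself lies in $\mathcal B$ (it is, e.g., CAT(0)), closure under finite direct products also gives $G\times\mathbb Z\in\mathcal B\subseteq\mathcal{FJ}$.

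Next I would dispose of torsion. If $G$ has torsion, then $G\rtimes\mathbb Z$ and $G\times\mathbb Z$ contain torsion, so they are not fundamental groups of closed aspherical manifolds, and the simple and bordism Borel conjectures hold for $G\rtimes\mathbb Z$ vacuously; hence we may assume $G$ is torsion free. Then $G\in\mathcal{FJ}$, and the first clause of Theorem~A gives the simple Borel conjecture for $G\rtimes\mathbb Z$ in dimensions $\ge 5$. Moreover $G\times\mathbb Z$ is torsion free and satisfies the $K$-theoretic FJC with coefficients in $\mathbb Z$, so $Wh(G\times\mathbb Z)=0$, since a torsion free group satisfying the $K$-theoretic FJC has trivial Whitehead group (see \cite{LR}). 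The second clause of Theorem~A then gives the bordism Borel conjecture for $G\rtimes\mathbb Z$ in dimensions $\ge 5$.

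I expect the one genuinely substantive point to be the inclusion $\mathcal B\subseteq\mathcal{FJ}$, and within it the care needed is that one must invoke the \emph{additive-category} version of the FJC, rather than merely the $\mathbb Z$-coefficient version, for every generating class in~(1): it is the additive-category version that enjoys the inheritance properties used in~(2) and~(3). For the classes newly adjoined here to the Bartels--L\"uck list, one must confirm that the cited proofs really deliver that stronger form. Everything else is a formal consequence of Theorem~A.
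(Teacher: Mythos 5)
Your proposal is correct and follows essentially the same route as the paper: reduce to Theorem A, establish $\mathcal B\subseteq\mathcal{FJ}$ via the cited additive-category FJC results for the generating classes together with the inheritance properties for operations (2) and (3), dismiss the torsion case as vacuous, and get $Wh(G\times\mathbb Z)=0$ from $G\times\mathbb Z$ being a torsion free group in $\mathcal B\subseteq\mathcal{FJ}$. Your explicit remarks that the additive-category version is what makes the closure properties work, and that it is $G\rtimes\mathbb Z$ (not just $G$) containing torsion that makes the statement vacuous, are slightly more careful articulations of what the paper does implicitly.
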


\begin{proof}  Note that we can assume $G$ is torsion free, otherwise $G$ is not the fundamental group of any closed aspherical manifold. Now we have $\mathcal B\subseteq\mathcal {FJ}$ since the groups in part (1) of Definition \ref{C} and the groups obtained from these groups by the operations in parts (2) and (3)  satisfy the $K$- and $L$-theoretic FJC with coefficients in $\mathbb Z$ (with coefficients in any additive category indeed), see \cite{BL2},\cite{BLR2},\cite{WC},\cite{WC1},\cite{BFL},\cite{KLR},\cite{GMR},\cite{FW1},\cite{FW2},\cite{RH}. Therefore, Corollary \ref{cor1.2} follows from Theorem A and the fact that $Wh(G\times\mathbb Z)=0$ if $G$ is torsion free and $G\in\mathcal B$. This is because $G\times\mathbb Z\in\mathcal B\subseteq\mathcal{FJ}$ and Whitehead groups of torsion free FJ groups vanish.
\end{proof}

One of the main ingredients in proving Theorem A is the following result, which is of course of independent interest:

\begin{thmB} Let $G$ be a torsion free group and $G\rtimes\mathbb Z$ be any semi-direct product of $G$
with $\mathbb Z$.
\begin{enumerate}
\item Suppose the $L$-theoretic FJC with coefficients in $\mathbb Z$ holds for  $G$, then it also holds for  $G\rtimes\mathbb Z$.
\item Suppose the $K$-theoretic FJC with coefficients in $\mathbb Z$ holds for  $G$, then there are obstruction groups, $Nil_n^{G\rtimes\mathbb Z}, n\in\mathbb Z$, associated to the given data, so that $G\rtimes\mathbb Z$ satisfies the $K$-theoretic FJC with coefficients in $\mathbb Z$ if and only if $Nil_n^{G\rtimes\mathbb Z}=0, \forall n\in\mathbb Z$.
\end{enumerate}
\end{thmB}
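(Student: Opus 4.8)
The plan is to reduce both statements to a statement about the Farrell--Jones conjecture with coefficients in an additive category, using the fact that $G\rtimes\mathbb Z$ is built from $G$ by an HNN-type extension along an automorphism $\alpha\colon G\to G$. The key structural observation is that $\mathbb Z[G\rtimes\mathbb Z]$ is the $\alpha$-twisted Laurent extension $\mathbb Z[G]_\alpha[t,t^{-1}]$, so the algebraic $K$- and $L$-theory of $G\rtimes\mathbb Z$ is governed by a twisted Bass--Heller--Swan / Wang-type decomposition. First I would set up the Davis--L\"uck assembly-map formalism of Section~\ref{section 2.1} for the group $\Gamma := G\rtimes\mathbb Z$ and use the Transitivity Principle: to prove the $L$-theoretic (resp.\ $K$-theoretic) FJC for $\Gamma$ with coefficients in $\mathbb Z$, it suffices to prove it relative to the family of virtually cyclic subgroups, and then, via the projection $p\colon \Gamma\to\mathbb Z$, to handle the subgroups $p^{-1}(C)$ for $C\subseteq\mathbb Z$ cyclic. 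Since $G$ is torsion free, $p^{-1}(C)$ is either trivial, conjugate into $G$, or again of the form $G\rtimes\mathbb Z$; the point is that $p^{-1}(\{0\})=G$, so the relative assembly map we must control is the one comparing the family $\mathcal{VCyc}$ on $\Gamma$ with subgroups that are ``$G$-like.''

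The second step is to invoke the inheritance of the Farrell--Jones conjecture with coefficients in an additive category: by Bartels--Reich and the subsequent work on twisted Bass--Heller--Swan decompositions (e.g.\ the results of Bartels--L\"uck on the $K$- and $L$-theory of $G\rtimes\mathbb Z$), if the FJC with coefficients in additive categories holds for $G$, it holds for $G\rtimes\mathbb Z$. Concretely, one feeds the automorphism $\alpha$ into an additive category carrying the twisting, so that the assembly map for $\Gamma$ relative to $\mathcal{VCyc}$ is assembled from assembly maps for $G$ relative to $\mathcal{VCyc}(G)$ with coefficients in these enlarged categories. For the $L$-theoretic statement (1), since $G$ is torsion free the family $\mathcal{VCyc}(G)$ can be reduced to $\mathcal{F}in$ (which for torsion free $G$ is just the trivial family), and the $L$-theoretic Nil-type terms in the twisted BHS decomposition vanish --- this is the classical fact that $UNil$ contributions and the twisted Nil summands do not obstruct $L$-theory in the relevant situation, or more precisely are already absorbed into the virtually cyclic family on $\Gamma$. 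Hence the $L$-theoretic FJC with coefficients in $\mathbb Z$ passes from $G$ to $G\rtimes\mathbb Z$. For statement (2), the same machine applies but the $K$-theoretic twisted Bass--Heller--Swan decomposition contains genuine Nil-type summands $\widetilde{\mathrm{Nil}}$ associated to $(\mathbb Z[G],\alpha)$; I would define $Nil_n^{G\rtimes\mathbb Z}$ to be precisely the cokernel/obstruction measuring the failure of the relative assembly map $H^{\Gamma}_n(E_{\mathcal{F}in}\Gamma)\to H^{\Gamma}_n(E_{\mathcal{VCyc}}\Gamma)$ to be an isomorphism, identify these with the twisted Nil-groups via the Farrell--Jones description of $K$-theory of virtually cyclic groups, and conclude that the full $K$-theoretic assembly map is an isomorphism iff these groups all vanish.

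The main obstacle, and the step requiring the most care, is the passage from ``FJC with coefficients in $\mathbb Z$ for $G$'' to ``FJC with coefficients in the relevant additive categories for $G$.'' The conjecture with $\mathbb Z$-coefficients is a priori weaker than the version with arbitrary additive coefficients, so one cannot directly apply the additive-category inheritance theorems. The resolution --- which is really the heart of Theorem B and the reason the torsion freeness of $G$ is essential --- is that for a torsion free group the $\mathbb Z$-coefficient FJC is in fact equivalent to the additive-coefficient FJC relative to the trivial family; this uses that every additive category with trivial $G$-action can be handled by a controlled-topology/swindle argument reducing to $\mathbb Z$, together with the fact that $E_{\mathcal{VCyc}}G$ and $E_{\mathcal{F}in}G=EG$ differ only by virtually cyclic cells which, for torsion free $G$, are infinite cyclic --- and the infinite-cyclic case of the additive FJC is exactly a twisted BHS statement that is provable unconditionally. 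I expect to spend most of the argument verifying this equivalence carefully, tracking the twisting $\alpha$ through the additive categories, and checking that in the $L$-theoretic setting the decorations ($h$ versus $s$ versus $\langle-\infty\rangle$) are compatible with the reduction --- this is where the ``simple'' qualifier in Theorem A ultimately originates.
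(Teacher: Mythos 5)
There is a genuine gap, and it sits exactly at the step you yourself flag as the heart of the matter. Your proposed resolution --- that for a torsion free group the FJC with coefficients in $\mathbb Z$ is \emph{equivalent} to the FJC with coefficients in arbitrary additive categories (relative to the trivial family), provable by ``a controlled-topology/swindle argument reducing to $\mathbb Z$'' --- is unsubstantiated and is precisely what is not known. An additive category with a (even trivial) $G$-action is not built out of copies of $\mathbb Z$ in any way the assembly map can see, and no Eilenberg swindle performs such a reduction; if this equivalence were available, Theorem B would be an immediate corollary of the known inheritance theorems for FJC$_{Ad}$ and the entire construction of Section \ref{equiob} would be superfluous. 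The paper stresses that inheritance properties are unknown for the fixed-coefficient conjecture, which is exactly why a new argument is needed. Your appeal to the Transitivity Principle along the projection $p:G\rtimes\mathbb Z\to\mathbb Z$ has the same problem in a different guise, and is moreover circular: the preimage of any infinite cyclic subgroup of $\mathbb Z$ is again a group of the form $G\rtimes\mathbb Z$ (the thing to be proved), and the transitivity/fibered inheritance statements you would invoke are themselves only established for the additive-category version of the conjecture, not for FJC$_{\mathbb Z}$.

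What the paper does instead never upgrades the coefficients. Torsion freeness (plus regularity of $\mathbb Z$ in $K$-theory and the $\langle-\infty\rangle$ decoration in $L$-theory) is used only to replace $E_{\mathcal{VC}}$ by the free classifying space $E(G\rtimes\mathbb Z)$, so that by Theorems \ref{obstruction1} and \ref{vanishing} the conjecture for $G\rtimes_\beta\mathbb Z$ becomes the vanishing of the $K$- or $L$-theory of the obstruction category $\mathcal O^{G\rtimes_\beta\mathbb Z}(E(G\rtimes_\beta\mathbb Z);\mathcal A)$. The key new ingredient is the categorical equivalence of Theorem \ref{equivalence}: for a free $\Gamma$-CW complex this obstruction category is equivalent to the twisted Laurent category $\mathcal O^{G}(E(G\rtimes_\beta\mathbb Z);\mathcal A)_\beta[\mathbb Z]$. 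Since $E(G\rtimes_\beta\mathbb Z)$ is also a model for $EG$, the hypothesis on $G$ (with the \emph{same} coefficients) gives vanishing of the $K$- or $L$-theory of $\mathcal O^{G}(E(G\rtimes_\beta\mathbb Z);\mathcal A)$, and then the twisted Wang/Bass--Heller--Swan sequences of Ranicki and L\"uck--Steimle are applied \emph{to this obstruction category}, not to $\mathbb Z[G]$. In $L^{\langle-\infty\rangle}$-theory the sequence has no Nil terms, giving part (1) unconditionally; in $K$-theory the Nil summands of the obstruction category are by definition the groups $Nil_n^{G\rtimes\mathbb Z}$ of part (2). This is the device that makes the fixed-coefficient hypothesis suffice, and it is the step your proposal is missing.
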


\begin{rem} As one sees,  we are not able to prove the same result as in part (1) of Theorem B for the $K$-theoretic FJC with coefficients in $\mathbb Z$. Therefore, some additional argument is needed in order to deduce Theorem A from Theorem B. This is achieved by proving some general result  on $L$-groups. See Lemma \ref{l group}.
\end{rem}

\begin{rem}\label{general} Our result is indeed more general. It still holds  if we replace the coefficient ring $\mathbb Z$
by any right $G$-additive category $\mathcal A$ with involution in part (1) and replace the coefficient ring $\mathbb Z$ by any associative ring with unit which is \textit{regular} in part (2) (a  ring is called \textit{regular} if it is left Noetherian and every finitely generated left $R$-module has a projective resolution of finite type). For example, let $\mathcal A$
be a right $G\rtimes\mathbb Z$-additive category with involution, view $\mathcal A$ as a right $G$-additive category with involution in the natural way.  Then  if the $L$-theoretic FJC with coefficient in $\mathcal A$ holds for $G$,  the $L$-theoretic FJC with coefficient in $\mathcal A$  also holds for $G\rtimes\mathbb Z$.
\end{rem}

\begin{rem} A key step in the proof of FJC for some classes of groups, including the class of Baumslag-Solitar groups
\cite{FW1},\cite{FW2} and the class of solvable groups \cite{WC1}, is to prove the conjecture for a group of the form $G\rtimes_\alpha\mathbb Z$, where $G$ is a torsion free abelian group and $\alpha$ is some special action of $\mathbb Z$ on $G$. Complicated geometric arguments  are used in these works in order to prove the conjecture for $G\rtimes_\alpha\mathbb Z$.  Therefore, a general result as in part (1) of Theorem B is very useful.
\end{rem}

%
%

In the literature, the term FJC may refer to different versions of the conjecture and they have different inheritance properties.  For convenience, we introduce the following. Let $G$ be a group and $\mathcal A$ be a right $G$-additive category. We say a group $G$  satisfies FJC$_{\mathcal A}$ if $G$ satisfies FJC with coefficient in $\mathcal A$. We say $G$ satisfies FJC$_{Ad}$ if it satisfies FJC$_{\mathcal A}$ for every additive category $\mathcal A$ with a right $G$-action. When $\mathcal A$ is the category of finitely generated left free $R$-modules for some associative ring $R$ with trivial group action, we then denote FJC$_{\mathcal A}$ by FJC$_{R}$.

The version FJC$_{Ad}$ has some nice inheritance properties.  For example, FJC$_{Ad}$ is closed under 
the operations in parts (2) and (3) of Definition \ref{C}, see \cite[Section 2.3]{BFL} for a summary.
However, these inheritance properties are unknown for  FJC$_{\mathcal A}$; except it was proved in
\cite[Theorem 0.8]{BEL} that FJC$_{\mathcal A}$ is closed under taking direct colimit of groups with injective structure maps. Theorem B is probably the second inheritance property for the $L$-theoretic FJC$_{\mathcal A}$.

Theorem B together with some nice inheritance properties of FJC$_{Ad}$  implies the following:

\begin{cor}\label{extension2} Let $1\longrightarrow K\longrightarrow G\longrightarrow Q\longrightarrow 1$ be an extension of groups. Suppose both $K$ and $Q$ are torsion free and satisfy the $L$-theoretic FJC$_{Ad}$, then $G$ also satisfies the $L$-theoretic FJC$_{Ad}$.
\end{cor}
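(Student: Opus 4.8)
The plan is to deduce this from the fibered inheritance property of the $L$-theoretic FJC$_{Ad}$ together with Theorem B. Write $\phi\colon G\to Q$ for the quotient map, so that $\phi^{-1}(1)=K$. Recall that the $L$-theoretic FJC$_{Ad}$ is inherited along group homomorphisms in the sense of part (3) of Definition \ref{C}: this is the transitivity principle (valid for any equivariant homology theory with the relevant induction structure, in particular for $L$-theory with additive-category coefficients), and it is summarized for FJC$_{Ad}$ in \cite[Section 2.3]{BFL}; the argument there applies to the $L$-theoretic conjecture by itself. Thus it suffices to show that $\phi^{-1}(V)$ satisfies the $L$-theoretic FJC$_{Ad}$ for every virtually cyclic subgroup $V\subseteq Q$; since $Q$ itself satisfies the $L$-theoretic FJC$_{Ad}$ by hypothesis, the conclusion will then follow.

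Next I would analyze the preimages $\phi^{-1}(V)$. Because $Q$ is torsion free, every virtually cyclic subgroup $V\subseteq Q$ is either trivial or infinite cyclic: an infinite virtually cyclic group surjects, with finite kernel, either onto $\mathbb Z$ or onto the infinite dihedral group, and in either case torsion freeness forces the relevant finite subgroups to be trivial, leaving only $V\cong\mathbb Z$. If $V$ is trivial, then $\phi^{-1}(V)=K$, which satisfies the $L$-theoretic FJC$_{Ad}$ by hypothesis. If $V\cong\mathbb Z$, then the extension $1\to K\to\phi^{-1}(V)\to\mathbb Z\to 1$ splits, since $\mathbb Z$ is free, so $\phi^{-1}(V)\cong K\rtimes\mathbb Z$ for some action of $\mathbb Z$ on $K$. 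Now fix any additive category $\mathcal A$ with involution equipped with a right $K\rtimes\mathbb Z$-action and restrict it to a right $K$-additive category with involution along $K\hookrightarrow K\rtimes\mathbb Z$. Since $K$ is torsion free and satisfies the $L$-theoretic FJC with coefficients in $\mathcal A$ (as it satisfies FJC$_{Ad}$), Theorem B(1) in the generality of Remark \ref{general} gives that $K\rtimes\mathbb Z$ satisfies the $L$-theoretic FJC with coefficients in $\mathcal A$; as $\mathcal A$ is arbitrary, $\phi^{-1}(V)=K\rtimes\mathbb Z$ satisfies the $L$-theoretic FJC$_{Ad}$. This checks the hypotheses of the fibered inheritance principle, and the corollary follows.

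I expect the genuine content to be entirely concentrated in Theorem B(1)/Remark \ref{general}; the remaining steps are bookkeeping. The two points that require care are (i) making sure that "$K$ satisfies FJC$_{Ad}$" provides precisely the coefficient input Theorem B needs, for \emph{each} coefficient category of $K\rtimes\mathbb Z$ — this works because every right $K\rtimes\mathbb Z$-additive category with involution restricts to an admissible coefficient category for $K$ — and (ii) invoking the transitivity/fibered inheritance for the $L$-theoretic FJC$_{Ad}$ alone rather than for the combined $K$- and $L$-theoretic statement. It is worth noting that this strategy does \emph{not} extend to the $K$-theoretic FJC$_{Ad}$: there the best available input for the cyclic preimages is Theorem B(2) rather than an unconditional statement, and the obstruction groups $Nil_n^{K\rtimes\mathbb Z}$ need not vanish, which is exactly why the $K$-theoretic analogue of this corollary is left open.
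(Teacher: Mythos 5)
Your proposal is correct and follows essentially the same route as the paper: reduce via the inheritance property (3) of Definition \ref{C} for the $L$-theoretic FJC$_{Ad}$ to preimages of virtually cyclic subgroups of $Q$, observe these are $K$ itself or of the form $K\rtimes\mathbb Z$ since $Q$ is torsion free, and then apply Theorem B(1) in the generality of Remark \ref{general} for an arbitrary coefficient category restricted to $K$. Your explicit remarks on the splitting of the extension over $\mathbb Z$ and on restricting coefficients are just the bookkeeping the paper leaves implicit.
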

\begin{proof} It is well-known that an infinite virtually cyclic group either admits a surjection onto the infinite cyclic group with finite kernel or admits a surjection onto the infinite Dihedral group with finite kernel. Therefore a torsion free virtually cyclic group is either trivial or infinite cyclic.   Hence, by the fact that FJC$_{Ad}$ is closed under operation (3) in Definition \ref{C}, see \cite[Theorem 2.7]{BFL}\cite[Theorem A]{HPR1}, it suffices to show that for any infinite cyclic subgroup of $Q$, its inverse image in $G$ satisfies the $L$-theoretic FJC$_{Ad}$. Note that its inverse image in $G$ is isomorphic to a group of the form $K\rtimes_{\alpha}\mathbb Z$.  Let $\mathcal A$ be any additive $K\rtimes_\alpha\mathbb Z$ category. By assumption, $K$ is torsion free and satisfies the $L$-theoretic  FJC$_{\mathcal A}$, so by Theorem B and Remark \ref{general}, $K\rtimes_\alpha\mathbb Z$ also satisfies the $L$-theoretic FJC$_{\mathcal A}$. Since $\mathcal A$ can be arbitrary, we see that $K\rtimes_{\alpha}\mathbb Z$ satisfies FJC$_{Ad}$. This completes the proof.
\end{proof}

Let $\mathcal C$ be a class of groups. A group $G$ is called \textit{poly-$\mathcal C$} if it has a filtration by subgroups $$1=G_0\subseteq G_1\subseteq G_2\subseteq\cdots\subseteq G_n=G$$
so that each $G_{i-1}$ is normal in $G_i, i=1, 2, \cdots, n$ and each $G_i/G_{i-1}$ is in $\mathcal C$.

\begin{defn}\label{poly} Let $\mathcal B$ be as in Definition \ref{C}.  Define $\mathcal D\subseteq\mathcal B$ to be the  class of groups in $\mathcal B$ which are torsion free. 
\end{defn}

\begin{cor} The class of poly-$\mathcal D$-groups satisfies the $L$-theoretic FJC$_{Ad}$. Therefore, they also satisfy the Novikov conjecture.
\end{cor}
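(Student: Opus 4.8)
The plan is to argue by induction on the length $n$ of a poly-$\mathcal D$ filtration, with Corollary \ref{extension2} serving as the inductive step, after first observing that poly-$\mathcal D$-groups and all the terms of their filtrations are torsion free.

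First I would record the elementary fact that a group admitting a subnormal filtration $1 = G_0 \trianglelefteq G_1 \trianglelefteq \cdots \trianglelefteq G_n = G$ with each quotient $G_i/G_{i-1}$ torsion free is itself torsion free: if $x \in G$ has finite order and $x \neq 1$, choose the least $j$ with $x \in G_j$; then $j \geq 1$, the coset $xG_{j-1}$ is a nontrivial element of $G_j/G_{j-1}$, and its order divides the order of $x$, contradicting torsion-freeness of $G_j/G_{j-1}$. Since $\mathcal D$ consists of torsion free groups, it follows that every poly-$\mathcal D$-group is torsion free, and, more importantly, that each term $G_i$ of a poly-$\mathcal D$ filtration is torsion free and is itself poly-$\mathcal D$ (via the truncated filtration $1 = G_0 \trianglelefteq \cdots \trianglelefteq G_i$).

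Now for the induction. If $n \leq 1$ the group is trivial or lies in $\mathcal D \subseteq \mathcal B$, and groups in $\mathcal B$ satisfy FJC$_{Ad}$, in particular the $L$-theoretic FJC$_{Ad}$, by the references cited in the proof of Corollary \ref{cor1.2}. For $n \geq 2$, given a poly-$\mathcal D$-group $G$ with filtration $1 = G_0 \trianglelefteq \cdots \trianglelefteq G_n = G$, set $K = G_{n-1}$ and $Q = G/G_{n-1} = G_n/G_{n-1} \in \mathcal D$. Then $K$ is poly-$\mathcal D$ of length $n-1$, hence torsion free and, by the inductive hypothesis, satisfies the $L$-theoretic FJC$_{Ad}$; and $Q$ is torsion free and satisfies the $L$-theoretic FJC$_{Ad}$ since $\mathcal D \subseteq \mathcal B$. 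Applying Corollary \ref{extension2} to the extension $1 \to K \to G \to Q \to 1$ shows that $G$ satisfies the $L$-theoretic FJC$_{Ad}$, which closes the induction. The Novikov conjecture for poly-$\mathcal D$-groups then follows at once, since the $L$-theoretic FJC with coefficients in $\mathbb Z$ is a special case of the $L$-theoretic FJC$_{Ad}$ and implies the Novikov conjecture, as recalled in the introduction.

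I do not expect a genuine obstacle here: the statement is a formal consequence of Corollary \ref{extension2} (and hence, ultimately, of Theorem B), and the only point demanding any care is the bookkeeping that torsion-freeness passes to every term of the filtration and to every quotient $G/G_{n-1}$, which is precisely the hypothesis needed in order to invoke Corollary \ref{extension2} at each stage of the induction.
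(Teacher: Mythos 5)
Your proposal is correct and is essentially the paper's own argument: the paper simply says the corollary follows "by repeatedly applying Corollary \ref{extension2}," which is exactly your induction on the length of the filtration. Your additional bookkeeping — that torsion-freeness of the quotients forces each term $G_i$ (and hence $K=G_{n-1}$ and $Q=G/G_{n-1}$) to be torsion free, so the hypotheses of Corollary \ref{extension2} are met at every stage — is the right verification of what the paper leaves implicit.
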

\begin{proof}   By repeatedly applying Corollary \ref{extension2}, we see that the $L$-theoretic FJC$_{Ad}$ holds for poly-$\mathcal D$ groups. 
\end{proof}

\begin{rem} The best result so far for the Novikov conjecture is due to Guoliang Yu, who proved in \cite{YGL} the coarse Baum-Connes conjecture for groups which admit a uniform
   embedding into Hilbert space. This is a huge class of groups. However, there are groups, constructed as direct colimits of Gromov hyperbolic groups, which do not admit a uniform embedding into Hilbert space, see for example \cite{HLS}. Note that these groups lie in the class $\mathcal B$. Also it is still an open question that whether CAT(0)-groups and their extensions admit a uniform embedding into Hilbert space. Therefore,  poly-$\mathcal D$-groups contain new examples of groups that satisfy the Novikov conjecture. 

\end{rem}

The paper is organized as follows. In Section \ref{conalg}, we briefly recall the formulation of the Farrell-Jones conjecture and the controlled algebra approach to the conjecture. We use this approach to prove Theorem B. We also prove some lemmas on the equivariantly continuously controlled condition. These lemmas are important in proving the main theorem (Theorem \ref{equivalence}) of Section \ref{equiob}, which is a key step in proving Theorem B. We prove our main theorems in the final section, in which we first prove an important lemma (Lemma \ref{l group}) for $L$-groups.  This lemma together with Theorem B enable us to prove Theorem A. 

\vskip 10pt
\noindent
\textbf{Acknowledgement.}
The author would like to thank Professor Bruce Hughes for his support and  helpful comments about the paper. The author would also like to thank his thesis advisor Professor Jean-Fran\c{c}ois Lafont for his enormous and continuous support throughout the years. Part of the project was initiated when the author was a graduate student at the Ohio State University under the guidance of Jean. The author also thanks Professor Guoliang Yu for his support and encouragement. The author learned a lot through numerous discussions with Guoliang. The author also thanks Shanghai Center for Mathematical Science for its hospitality during the author's visit in Summer 2015.   Part of the project was done during this visit.



\section{The Farrell-Jones conjecture and controlled algebra}\label{conalg}

In the first half of the section, we briefly recall the formulation of the FJC  and the  controlled algebra approach to the conjecture. More details can be found in \cite{fj}, \cite{DL}, \cite{BFJR}, \cite{BR}. This approach will  be important to our treatment later. In the second half of the section, we prove some lemmas on the \textit{equivariantly continuously controlled} condition (see Definition \ref{ecc}) that will be important in later sections. 

\subsection{The Farrell-Jones conjecture}\label{section 2.1} Let $G$ be a group and $\mathcal A$ be an additive category with a right $G$-action $\alpha$ (we always assume  $\mathcal A$ comes with an involution, which is compatible with the right $G$-action, when we talk about $L$-theory).   One can  form the  ``twisted group additive category" $\mathcal A_\alpha[G]$ \cite[Definition 2.1]{BR} (this category is denoted by $\mathcal A*_Gpt$ in \cite{BR}, but it is more enlightening to denote this category by $\mathcal A_\alpha[G]$ for our purpose). Its definition is recalled in Definition \ref{groupcat} and it will be important to our treatment later.  When 
$\mathcal A$ is the additive category of finitely generated left free $R$-modules with the trivial $G$-action, $\mathcal A[G]$ is equivalent to the additive category of finitely generated left  free $R[G]$-modules. 
Now  let $H^G_*(-; \textbf{K}_{\mathcal A})$ and $H^G_*(-;\textbf{L}^{<-\infty>}_{\mathcal A})$ be the two $G$-equivariant homology theories constructed by Bartels and Reich in \cite{BR}, using the method of Davis and L\"uck \cite{DL}. These two equivariant homology theories have the property that for any subgroup $H<G$, $H^G_n(G/H; \textbf{K}_{\mathcal A})=K_n(\mathcal A_{\alpha}[H]),  H^G_n(G/H;\textbf{L}^{<-\infty>}_{\mathcal A})=L_n^{<-\infty>}(\mathcal A_\alpha[H]), \forall n\in\mathbb Z$, where the decoration $-\infty$ on the $L$-groups means we are dealing with Ranicki's ultimate $L$-groups. In particular, $H^G_n(pt; \textbf{K}_{\mathcal A})=K_n(\mathcal A_{\alpha}[G])$ and $H^G_n(pt;\textbf{L}^{<-\infty>}_{\mathcal A})=L_n^{<-\infty>}(\mathcal A_\alpha[G])$. 

We also need the notion of \textit{classifying space} of a group relative to a family of its subgroups. For $\mathcal F$,  a family of subgroups of $G$ which is closed under taking subgroups and conjugations, denote by $E_{\mathcal F}G$ a model for the classifying space of $G$ relative to the family $\mathcal F$. It is a $G$-CW complex and is characterized, up to
$G$-equivariant homotopy equivalence, by the properties that every isotropy group of the action lies in the family $\mathcal F$, and the fixed point set of $H$ is contractible if $H\in\mathcal F$ and empty if $H\notin\mathcal F$. 
Examples of classifying spaces are $EG, E_{\mathcal {FIN}}G$, and $E_{\mathcal VC}G$, corresponding to the families of trivial, finite, and virtually cyclic  subgroups  of $G$ respectively.  

\vskip 10pt
\noindent
\textit{The Farrell-Jones Conjecture.}\label{FJC} Let $G$ be a group and $\mathcal A$ be an additive category with a right $G$-action. We say the group $G$ satisfies the $K$-theoretic FJC with coefficient in $\mathcal A$ if the $K$-theoretic assembly map 
\begin{align}\label{k assembly}
& A{_{K}^{\mathcal A}}: H^G_n(E_\mathcal{VC}G; \textbf{K}_{\mathcal A})\rightarrow H^G_n(pt; \textbf K_{\mathcal A})=K_n(\mathcal A_\alpha[G])
\end{align}
which is induced by the obvious map $E_\mathcal{VC}G\rightarrow pt$, is an isomorphism for all $n\in\mathbb Z$. We say the group $G$ satisfies the $L$-theoretic FJC with coefficient in $\mathcal A$ if  the 
corresponding $L$-theoretic assembly map
\begin{align}\label{l assembly}
& A_L^{\mathcal A}: H^G_n(E_\mathcal{VC}G; \textbf{L}^{<-\infty>}_{\mathcal A})\rightarrow H^G_n(pt; \textbf L^{<-\infty>}_{\mathcal A})=L^{<-\infty>}_n(\mathcal A_\alpha[G])
\end{align}
is an isomorphism for all $n\in\mathbb Z$.

\subsection{Controlled algebra approach to the FJC} Let $\mathcal A$ be a (small) additive category with a right $G$-action and $X$ be a left $G$-space. The additive category $\mathcal C(X; \mathcal A)$ of \textit{geometric modules} over $X$ with coefficient in $\mathcal A$ is defined as follows: objects are functions $A: X\rightarrow Ob(\mathcal A)$ with \textit{locally finite support,} i.e. $\text{supp}A=\{x\in X|\ A_x\ne 0\}$ is a locally finite subset of $X$, meaning every point in $X$ has an open neighborhood whose intersection with supp$A$ is finite. An object will usually be denoted by $A=(A_x)_{x\in X}$. A morphism $\phi: A\rightarrow B$ is a matrix of morphisms $(\phi_{y,x}: A_x\rightarrow B_y)_{(x,y)\in X\times X}$ such that there are only finitely many nonzero entries in each row and each column. Compositions of morphisms are given by matrix multiplications. More precisely
$$(\psi_{z, y}: B_y\rightarrow C_z)\circ(\phi_{y, x}: A_x\rightarrow B_y)=(\sum_{y\in X}\psi_{z, y}\circ\phi_{y, x}: A_x\rightarrow C_z)$$
Note that the sum on the right hand side is finite.
There is a right $G$-action on $\mathcal C(X;\mathcal A)$, which is given by 
$$(g^{*}A)_x= g^*(A_{gx}), \  (g^*\phi)_{y,x}=g^{*}(\phi_{gy, gx})$$ and the fixed category is denoted by $\mathcal {C}^{G}(X;\mathcal A)$.

For any object $A$ and morphism $\phi$ in $\mathcal C(X; \mathcal A)$, their supports are the following sets:
$$\text{supp}A=\{x\in X|\ A_x\ne 0\}, \ \ \text{supp}\ \phi=\{(x,y)\in X\times X|\ \phi_{y,x}\ne 0\}$$
In order to get interesting subcategories of $\mathcal C(X; \mathcal A)$, one can prescribe certain support conditions on objects and morphisms. The convenient language for this purpose is the notion of  \textit{coarse structures} on spaces introduced in \cite{HPR}. We recall the definition here (with a slight modification for our purpose).

\vskip 5 pt
\begin{defn}\label{coarse} A\textit{ coarse structure} ($\mathcal E, \mathcal F$) on a set $X$ is a collection $\mathcal E$ of subsets of $X\times X$, and a collection $\mathcal F$ of subsets of $X$  satisfying the following properties: 

(1) If $E', E''\in\mathcal E$, then $E'\cup E''\subseteq E$ for some $E\in \mathcal E$;

(2) If $E', E''\in\mathcal E$, then $E'\circ E''=\{(x,y)\in X\times X|\ \exists z\in X\ \text{s.t.}(x,z)\in E'\ \text{and}\ (z,y)\in E'' \}\subseteq E$ for some $E\in\mathcal E$;

(3) The diagonal $\vartriangle=\{(x,x)|\ x\in X\}$ is contained in some $E\in\mathcal E$;

(4) If $F', F''\in\mathcal F$, then $F'\cup F''\subseteq F$ for some $F\in\mathcal F$.
\end{defn}

If there is a $G$-action on the set $X$, we then require every member in $\mathcal E$ and $\mathcal F$ to be $G$-invariant, where $G$ acts on $X\times X$ diagonally.  If $p: Y\rightarrow X$ is a $G$-equivariant map, then the pullback ($(p\times p)^{-1}\mathcal E, p^{-1}\mathcal F)$ is a coarse structure on $Y$.

Now if $(\mathcal E, \mathcal F)$ is a coarse structure on a $G$-space $X$, one can define a subcategory $\mathcal C(X, \mathcal E,\mathcal F;\mathcal A)$ of $\mathcal C(X; \mathcal A)$, with object and morphism supports contained in members of $\mathcal F$ and $\mathcal E$ respectively (in addition to the general  finiteness conditions on them).  $G$ acts on  this additive subcategory and the fixed subcategory is denoted by $\mathcal C^G(X, \mathcal E, \mathcal F; \mathcal A)$. The pair $(\mathcal E, \mathcal F)$ are  usually referred to as control conditions on morphisms and objects.

One of the control condition on morphisms, which is used to construct a model for the assembly maps in the  FJC,   is the \textit{equivariantly continuously controlled} condition introduced in \cite{BFJR}.  It is a generalization  to the equivariant setting of the \textit{continuously controlled} condition introduced in \cite{ACFP}.   We recall the definition here.

\begin{defn}\label{ecc}(\cite[Definition 2.7]{BFJR}) Let $X$ be a topological space with a left $G$-action by homeomorphisms. Equip $X\times[1, \infty]$ with the diagonal $G$-action, where $G$-acts trivially on $[1, \infty]$. A subset $E\subseteq (X\times [1, \infty))^2 $ is called \textit{equivariantly continuously controlled} if the following holds:

(i) For every $x\in X$ and  $G_x$-invariant open neighborhood  $U$ of $(x, \infty)$ in $X\times[1, \infty]$, there exists a $G_x$-invariant neighborhood $V\subseteq U$ of $(x, \infty)$ in $X\times[1, \infty]$ such that 
$$(U^c\times V)\cap E=\emptyset$$
where $U^c$ denotes the complement of $U$ in $X\times [1, \infty]$.

(ii) There exists $\alpha>0$, depending on $E$,  such that if $(x,s)\times(x',s')\in E$, then $|s-s'|<\alpha$;

(iii) $E$ is symmetric, i.e. if $(p,q)\in E$, then $(q,p)\in E$;

(iv) $E$ is invariant under the diagonal action of $G$.
\end{defn}

The collection of $G$-equivariantly continuously controlled subsets of $(X\times [1,\infty))^2$ will be denoted by $\mathcal E^X_{Gcc}$.  It satisfies the conditions (1)-(3) in Definition \ref{coarse}.

\begin{defn}\label{obstruction}(\cite[Section 3.2]{BFJR}\cite[Section 3.3]{BLR2}) For any left $G$-space $X$ and additive category $\mathcal A$ with a right $G$-action, one defines the following categories:

(1) $\mathcal O^G(X; \mathcal A)=\mathcal C^G(G\times X\times [1,\infty), (p\times p)^{-1}\mathcal E^X_{Gcc}\cap (r\times r)^{-1}\mathcal E_G, q^{-1}\mathcal F_{Gc}; \mathcal A)$, where $p: G\times X\times [1,\infty)\rightarrow X\times [1,\infty)$, $q: G\times X\times [1,\infty)\rightarrow G\times X$ and $r: G\times X\times[1,\infty)\rightarrow G$ are projections, $\mathcal E_G=\{E\subseteq G\times G\ |\ gE=E\ \text{for\ all}\ 
g\in G\ \text{and}\ \exists\ \text{finite}\ S\subseteq G\ \text{s.t.}\ \text{for all}\ (g, g')\in E,\ \text{we have}\ g^{-1}g'\in S \}$ and $\mathcal F_{Gc}$ consists of $G$-cocompact subsets of $G\times X$, i.e. subsets of the form $G\cdot K\subseteq G\times X$, where $K\subseteq G\times X$ is compact;

(2) $\mathcal T^G(X; \mathcal A)$ is the full subcategory of $\mathcal O^G(X; \mathcal A)$ consisting of those objects $A$ with the following property: there exists $C>1$ such that if $A_{(g,x,t)}\ne 0$, then $t<C$;

(3) $\mathcal D^G(X; \mathcal A)$ is the quotient category of $\mathcal O^G(X; \mathcal A)$ by the full subcategory $\mathcal T^G(X; \mathcal A)$: it has the same objects as $\mathcal O^G(X; \mathcal A)$, and any morphism from $A$ to $B$ in $\mathcal D^G(X; \mathcal A)$ is
represented by a morphism $\phi: A\rightarrow B$ in $\mathcal O^G(X; \mathcal A)$, with two morphisms $\phi, \psi: A\rightarrow B$  identified if their difference $\phi-\psi$ factors through an object in $\mathcal T^G(X; \mathcal A)$.

\end{defn}

\begin{rem} In \cite{BFJR}, the morphism control condition $\mathcal E_G$ was not required in the definitions. It was added into the definitions in \cite{BLR2}\cite{BL2} because it was important for proving
the conjecture for  hyperbolic and CAT(0)-groups. Although this addition changes the categories, it does not change the theory. Later on, we will see that this is also important for our treatment. 
\end{rem}

These constructions define functors from the category of $G$-CW complexes to the category of additive categories. The importance of these categories lies in the following facts:

\begin{thm}\label{obstruction1} The following results hold:

(i) The sequence $$\mathcal T^{G}(X; \mathcal A)\rightarrow \mathcal O^{G}(X; \mathcal A)\rightarrow\mathcal D^{G}(X; \mathcal A)$$ is a Karoubi filtration, hence gives rise to a fibration sequence $$\mathbb K^{-\infty}(\mathcal T^{G}(X; \mathcal A))\rightarrow\mathbb{K}^{-\infty}(\mathcal O^G(X; \mathcal A))\rightarrow\mathbb K^{-\infty}(\mathcal D^{G}(X; \mathcal A))$$ of spectra after applying the non-connective $K$-theory, and therefore a long exact sequence on $K$-groups;

(ii) The additive category $\mathcal O^G(pt; \mathcal A)$ has trival $K$-groups;

(iii) The functor $\pi_*: \mathcal T^G(X; \mathcal A)\rightarrow \mathcal T^G(pt; \mathcal A)$ induced by the obvious map $\pi: X\rightarrow \{pt\}$ is an equivalence of categories, hence induces isomorphisms on $K$-groups; 

(iv) There is a natural isomorphism between the two functors $H^G_*(-; \textbf K_\mathcal A)$ and $K_{*+1}(\mathcal D^G(-; \mathcal A)):=\pi_{*+1}(\mathbb K^{-\infty}(\mathcal D^G(-; \mathcal A)))$ from the category of $G$-CW complexes to the category of graded abelian groups.  In particular, the map
\begin{align}
K_{*+1}(\mathcal D^G(E_{\mathcal {VC}}G; \mathcal A))\rightarrow K_{*+1}(\mathcal D^G(pt; \mathcal A))
\end{align}
is equivalent to the assembly map \ref{k assembly}. 
\end{thm}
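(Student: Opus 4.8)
The plan is to derive all four items by assembling, in the equivariant setting, the now-standard facts about continuously controlled categories; the arguments are those of \cite{ACFP}, \cite{BFJR} and \cite{BLR2}, and since these categories will be used heavily below I will only indicate the shape of each step.

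For (i), I would verify directly that $\mathcal{T}^G(X;\mathcal{A})$ is a Karoubi filtration of $\mathcal{O}^G(X;\mathcal{A})$ in the usual sense. The structure needed is supplied by the truncation functors $A\mapsto A_{<C}$ ($C>1$), which discard all components of $A$ sitting at heights $t\ge C$: these exhibit every object of $\mathcal{O}^G(X;\mathcal{A})$ as the filtered union of its subobjects that lie in $\mathcal{T}^G(X;\mathcal{A})$, and condition (ii) of Definition \ref{ecc} — a morphism cannot connect components whose heights differ by more than a fixed $\alpha$ — forces any morphism into or out of a bounded-height object to factor through some truncation $A_{<C}$. Granting the Karoubi filtration axioms, the fibration sequence of non-connective $K$-theory spectra, hence the long exact sequence of $K$-groups, is the general fact about Karoubi filtrations; see \cite{BLR2}, \cite{ACFP}.

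For (ii), I observe that over a point the equivariant continuous control condition degenerates to the metric condition (ii) of Definition \ref{ecc} alone, so $\mathcal{O}^G(pt;\mathcal{A})$ is a bounded category over the ray $[1,\infty)$ in which one may shift every object one unit towards $\infty$; this shift is an additive endofunctor, local finiteness makes $A\oplus\bigoplus_{k\ge1}(\mathrm{shift})^kA$ a legitimate object, and the resulting Eilenberg swindle kills all non-connective $K$-groups. For (iii), at bounded height the condition of Definition \ref{ecc}(i) is vacuous — given a $G_x$-invariant open $U\ni(x,\infty)$ one may take $V=U\cap(X\times(C,\infty])$ — so $\mathcal{T}^G(X;\mathcal{A})$ only remembers the cocompactness condition $\mathcal{F}_{Gc}$ and the condition $\mathcal{E}_G$ in the $G$-direction; the push-forward $\pi_*$ along $X\to\{pt\}$ is then fully faithful and essentially surjective, because $G$-cocompactness together with local finiteness makes each fibre of $G\times X\to G$ contribute only a finite direct sum, so collapsing $X$ loses no information.

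Part (iv) is the substantive one, and is where I expect the main difficulty to lie. The plan is to recognize $X\mapsto\mathbb{K}^{-\infty}(\mathcal{D}^G(X;\mathcal{A}))$ as a single delooping of a $G$-equivariant homology theory: it is $G$-homotopy invariant, sends disjoint unions to products, and sends $G$-pushouts to homotopy pullback (Mayer--Vietoris) squares. The first two are routine; the Mayer--Vietoris property is the heart of the matter, and is exactly where the equivariant continuous control condition does its work — passing from $\mathcal{O}^G$ to the quotient $\mathcal{D}^G=\mathcal{O}^G/\mathcal{T}^G$, the ``germ at infinity'', is what turns the construction into an excisive one, by the arguments of \cite{BFJR} modelled on \cite{ACFP}. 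Once excision is available, the uniqueness theorem for equivariant homology theories built from spectra over the orbit category (\cite{DL}) reduces everything to identifying $K_{*+1}(\mathcal{D}^G(G/H;\mathcal{A}))$, naturally in $G/H$, with $K_*(\mathcal{A}_\alpha[H])$, which are the coefficients of $H^G_*(-;\textbf{K}_\mathcal{A})$; this identification of coefficient spectra on orbits is carried out as in \cite{BFJR} and \cite{BLR2}, using an induction argument along $G/H$ together with the analysis in (i)--(iii). Tracking naturality in $X$ then yields the natural isomorphism $H^G_*(-;\textbf{K}_\mathcal{A})\cong K_{*+1}(\mathcal{D}^G(-;\mathcal{A}))$, and applied to the map $E_{\mathcal{VC}}G\to\{pt\}$ it identifies that map with the assembly map \ref{k assembly}. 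The hard part is thus the verification of the Mayer--Vietoris property for $\mathbb{K}^{-\infty}(\mathcal{D}^G(-;\mathcal{A}))$, a delicate ``eventually near $\infty$'' argument, together with the matching of coefficient spectra; both are done in detail in the references above.
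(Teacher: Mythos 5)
Your proposal is correct and follows essentially the same route as the paper, which itself gives no independent argument but cites \cite{CP} for the Karoubi filtration formalism, notes that (ii) is an Eilenberg swindle and (iii) a direct check, and defers (iv) to \cite{BFJR} and \cite{BR} (with the $L$-theory analogue in \cite{BL2}) --- exactly the references and standard arguments (truncation functors, shift-towards-infinity swindle, excision plus identification of coefficients over orbits) that you invoke. The only difference is that you sketch the standard verifications in slightly more detail than the paper does.
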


For information about Karoubi filtrations, see \cite{CP}. Fact (ii) can be proved by an Eilenberg swindle argument. Fact (iii) can be checked directly. Fact (iv) is first proven in \cite{BFJR} for coefficients in rings, and proven for  coefficients in additive categories  in \cite{BR}. One has the same constructions and results for $L$-theory and details can be found in \cite{BL2}. The above facts implies the following \cite{BLR2}\cite{BL2}:

\begin{thm}\label{vanishing} The $K$-theoretic FJC$_{\mathcal A}$ holds for $G$  if and only if the $K$-theory of  $\mathcal O^{G}(E_{\mathcal{VC}}G; \mathcal A)$ is trivial, i.e.  $K_n(\mathcal O^{G}(E_{\mathcal{VC}}G; \mathcal A))=0, \forall n\in\mathbb Z$. The corresponding statement is true for the $L$-theoretic FJC$_{\mathcal A}$.
\end{thm}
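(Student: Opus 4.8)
The plan is to read off the criterion from the ladder of fibration sequences of Theorem~\ref{obstruction1}(i) associated to the projection $\pi\colon E_{\mathcal{VC}}G\to pt$, by comparing homotopy fibers. I would carry this out for $K$-theory; the $L$-theoretic statement follows by the identical argument applied to the $L$-theoretic analogues of Theorem~\ref{obstruction1}(i)--(iv) recalled above.

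First I would restate the conjecture as a statement about a map of non-connective spectra: the assembly map \ref{k assembly} is an isomorphism for every $n\in\mathbb Z$ if and only if it is a weak equivalence of spectra, i.e.\ if and only if its homotopy fiber is weakly contractible. By Theorem~\ref{obstruction1}(iv), this assembly map is, up to the degree shift by one, the map
\[
\mathbb K^{-\infty}(\mathcal D^{G}(E_{\mathcal{VC}}G;\mathcal A))\longrightarrow\mathbb K^{-\infty}(\mathcal D^{G}(pt;\mathcal A))
\]
induced by $\pi$; so the $K$-theoretic FJC$_{\mathcal A}$ holds for $G$ if and only if this map is a weak equivalence.

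Next, since $X\mapsto\mathcal T^{G}(X;\mathcal A),\ \mathcal O^{G}(X;\mathcal A),\ \mathcal D^{G}(X;\mathcal A)$ are functorial in the $G$-CW complex $X$ and the Karoubi filtration of Theorem~\ref{obstruction1}(i) is natural in $X$, the map $\pi$ induces a commuting square of spectra whose horizontal maps are $\mathbb K^{-\infty}(\mathcal O^{G}(-;\mathcal A))\to\mathbb K^{-\infty}(\mathcal D^{G}(-;\mathcal A))$ and whose vertical maps are induced by $\pi$. The homotopy fibers of its two horizontal maps are $\mathbb K^{-\infty}(\mathcal T^{G}(E_{\mathcal{VC}}G;\mathcal A))$ and $\mathbb K^{-\infty}(\mathcal T^{G}(pt;\mathcal A))$, and the map between them induced by $\pi$ is a weak equivalence by Theorem~\ref{obstruction1}(iii). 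Hence the total (iterated) homotopy fiber of the square is weakly contractible, so the induced map on the two \emph{vertical} homotopy fibers is a weak equivalence:
\[
\mathrm{hofib}\bigl(\mathbb K^{-\infty}(\mathcal O^{G}(E_{\mathcal{VC}}G;\mathcal A))\to\mathbb K^{-\infty}(\mathcal O^{G}(pt;\mathcal A))\bigr)\ \xrightarrow{\ \simeq\ }\ \mathrm{hofib}\bigl(\mathbb K^{-\infty}(\mathcal D^{G}(E_{\mathcal{VC}}G;\mathcal A))\to\mathbb K^{-\infty}(\mathcal D^{G}(pt;\mathcal A))\bigr).
\]
By Theorem~\ref{obstruction1}(ii), $\mathbb K^{-\infty}(\mathcal O^{G}(pt;\mathcal A))$ is weakly contractible, so the left-hand homotopy fiber is canonically weakly equivalent to $\mathbb K^{-\infty}(\mathcal O^{G}(E_{\mathcal{VC}}G;\mathcal A))$. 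Concatenating the equivalences yields: the $K$-theoretic FJC$_{\mathcal A}$ holds for $G$ $\iff$ the $\mathcal D$-level map induced by $\pi$ is a weak equivalence $\iff$ its homotopy fiber is weakly contractible $\iff$ $\mathbb K^{-\infty}(\mathcal O^{G}(E_{\mathcal{VC}}G;\mathcal A))$ is weakly contractible $\iff$ $K_n(\mathcal O^{G}(E_{\mathcal{VC}}G;\mathcal A))=0$ for all $n\in\mathbb Z$.

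The main point requiring care is the bookkeeping for the square of spectra; if one prefers a more elementary route, the same conclusion follows by placing the two long exact sequences in $K$-groups from Theorem~\ref{obstruction1}(i) (for $E_{\mathcal{VC}}G$ and for $pt$) side by side, using that the vertical maps on the $\mathcal T$-terms are isomorphisms (Theorem~\ref{obstruction1}(iii)), that all $K$-groups of $\mathcal O^{G}(pt;\mathcal A)$ vanish (Theorem~\ref{obstruction1}(ii)), and applying a degreewise diagram chase / five lemma together with naturality of the connecting homomorphisms. The only genuine subtlety is that one must handle all degrees $n\in\mathbb Z$ simultaneously, which is precisely why the non-connective $K$-theory functor is used throughout; I do not expect any real obstacle. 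The $L$-theoretic case is word-for-word the same, with $\mathbb K^{-\infty}$ replaced by the corresponding $L$-theory spectrum functor and Theorem~\ref{obstruction1}(i)--(iv) replaced by their $L$-theoretic counterparts.
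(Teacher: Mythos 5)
Your argument is correct and is exactly the reasoning the paper intends: it deduces the criterion from facts (i)--(iv) of Theorem~\ref{obstruction1} (the paper itself only cites \cite{BLR2},\cite{BL2} for this implication), identifying the assembly map with the $\mathcal D$-level map and comparing the two Karoubi fibration sequences over $E_{\mathcal{VC}}G$ and $pt$ using the equivalence on $\mathcal T$-terms and the vanishing of $K_*(\mathcal O^G(pt;\mathcal A))$. No issues; the $L$-theoretic case is, as you say, verbatim the same.
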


Because of this theorem, the category $\mathcal O^G(E_{\mathcal {VC}}G; \mathcal A)$ is usually referred to as the \textit{obstruction category}.

\subsection{Some lemmas on equivariant continuous control} In this subsection, we prove some general results about the equivariantly  continuously controlled condition. These results will be important in proving Theorem \ref{equivalence}, which is one of the key ingredients in proving Theorem B. The proofs of some of these results and the proof of Theorem \ref{equivalence} in the next section are quite technical and readers may want to jump to Section \ref{main} to see the proof of the main theorems first  (assuming Theorem \ref{equivalence}.)

Now let $X$ be a $G$-CW complex and $H<G$ be a subgroup. Then $X$ is also an $H$-CW complex in a natural way. Thus we can consider $\mathcal E^X_{Gcc}$ and $\mathcal E^X_{Hcc}$. In the following several lemmas we study the relation between these two control conditions. 

\begin{lem}\label{11} $\mathcal E^X_{Gcc}\subseteq\mathcal E^X_{Hcc}$ if one of the following holds:

(i) The action of $G$ on $X$ is free.

(ii) $H<G$ is of finite index.
\end{lem}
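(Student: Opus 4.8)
The plan is to unwind Definition \ref{ecc} and show that every $G$-equivariantly continuously controlled set $E \subseteq (X\times[1,\infty))^2$ automatically satisfies the four conditions (i)--(iv) with $G$ replaced by $H$. Conditions (ii) and (iii) are insensitive to the group, so they carry over verbatim. For condition (iv), $H$-invariance of $E$ is immediate from $G$-invariance since $H < G$. Thus the only substantive point is condition (i): given $x \in X$ and an $H_x$-invariant open neighborhood $U$ of $(x,\infty)$ in $X\times[1,\infty]$, I must produce an $H_x$-invariant neighborhood $V \subseteq U$ of $(x,\infty)$ with $(U^c \times V)\cap E = \emptyset$, using only the corresponding property of $E$ for $G_x$-invariant neighborhoods.

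In case (i), when $G$ acts freely on $X$, we have $G_x = \{1\} = H_x$, so the notions of $G_x$-invariance and $H_x$-invariance coincide (both are vacuous), and condition (i) for $H$ is literally the same statement as condition (i) for $G$. Hence $\mathcal E^X_{Gcc} \subseteq \mathcal E^X_{Hcc}$ trivially. In case (ii), when $[G:H] < \infty$, the subgroup $H_x = H \cap G_x$ has finite index in $G_x$. The natural move is: starting from an $H_x$-invariant open neighborhood $U$ of $(x,\infty)$, form $U' = \bigcap_{g \in G_x/H_x} gU$ (a finite intersection over coset representatives, using that $G_x$ acts by homeomorphisms fixing $(x,\infty)$), which is a $G_x$-invariant open neighborhood of $(x,\infty)$ contained in $U$. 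Apply property (i) for $G$ to $U'$ to get a $G_x$-invariant neighborhood $V \subseteq U'$ of $(x,\infty)$ with $((U')^c \times V)\cap E = \emptyset$. Since $V \subseteq U' \subseteq U$, $V$ is in particular $H_x$-invariant, and since $U' \subseteq U$ gives $U^c \subseteq (U')^c$, we get $(U^c \times V)\cap E = \emptyset$, as required.

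The main obstacle is the point hidden in forming $U' = \bigcap_{g} gU$: one must check that this finite intersection is still an \emph{open neighborhood} of $(x,\infty)$ — openness is clear since each $gU$ is open and the intersection is finite, and $(x,\infty) \in gU$ because $g$ fixes $(x,\infty)$ (as $g \in G_x$ acts trivially on the $[1,\infty]$ factor and fixes $x$). One should also be slightly careful that the finite set of coset representatives can be chosen once and for all inside $G_x$, which is fine; and that in case (i) the freeness hypothesis genuinely forces $G_x$ trivial, which is the definition of a free action. With these checks the inclusion follows directly from the definitions, so I expect the proof to be short once the coset-averaging construction is in place.
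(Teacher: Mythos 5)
Your proposal is correct and matches the paper's argument: the paper likewise reduces everything to condition (i) of Definition \ref{ecc} and passes from an $H_x$-invariant neighborhood $U$ to the $G_x$-invariant neighborhood $\bigcap_{g\in G_x} gU$, which is a finite intersection precisely because $[G_x:H_x]\le [G:H]<\infty$ — the same coset-averaging you describe. Your write-up just spells out the routine checks (openness, $(x,\infty)\in gU$, $G_x$-invariance of the intersection) that the paper leaves implicit.
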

\begin{proof} (i) This is obvious.\\ 
(ii) The only nontrivial part is (i) of Definition \ref{ecc}. But this can be easily shown, by noting $\bigcap_{g\in G_x}gU$ is a $G_x$-invariant neighborhood of $(x, \infty)$ in $X\times [1, \infty]$ for any $H_x$-invariant neighborhood $U$ of 
$(x, \infty)$ in $X\times [1, \infty]$,  since $[G_x: H_x]\le[G:H]<\infty$.
\end{proof}

\begin{lem}\label{normal} If $H$ is a normal subgroup of $G$, then for any $E\in\mathcal E^X_{Hcc}$ and $g\in G$, we have $gE\in\mathcal E^X_{Hcc}$. 
\end{lem}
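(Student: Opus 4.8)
The statement to be proved is Lemma \ref{normal}: if $H \trianglelefteq G$ is normal, then $gE \in \mathcal E^X_{Hcc}$ whenever $E \in \mathcal E^X_{Hcc}$ and $g \in G$. Here $gE$ means the image of $E$ under the diagonal $G$-action on $(X\times[1,\infty))^2$. The plan is to verify each of the four defining conditions (i)--(iv) of Definition \ref{ecc} for the set $gE$, using that $g$ acts by a homeomorphism of $X\times[1,\infty]$ fixing the $[1,\infty]$ coordinate, and that conjugation by $g$ carries $H$-structure to $H$-structure because $H$ is normal.

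First I would dispatch the easy conditions. Condition (ii) is immediate: the homeomorphism induced by $g$ preserves the $[1,\infty)$ coordinate, so if $(x,s),(x',s') \in E$ satisfy $|s-s'| < \alpha$, then $g\cdot(x,s) = (gx,s)$ and $g\cdot(x',s') = (gx',s')$ still have $|s-s'| < \alpha$; the same $\alpha$ works for $gE$. Condition (iii), symmetry, is preserved because $g$ acts diagonally and $E$ is symmetric: if $(p,q) \in gE$ then $(g^{-1}p, g^{-1}q) \in E$, hence $(g^{-1}q, g^{-1}p) \in E$, hence $(q,p) \in gE$. Condition (iv), $H$-invariance of $gE$: take $h \in H$; then $h\cdot gE = g\cdot(g^{-1}hg)\cdot E$, and since $H$ is normal, $g^{-1}hg \in H$, so $(g^{-1}hg)\cdot E = E$ by the $H$-invariance of $E$, giving $h\cdot gE = gE$.

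The only condition requiring genuine work is (i), the equivariant continuous control near the fiber over $(x,\infty)$. The key observation is that the homeomorphism of $X \times [1,\infty]$ induced by $g$ sends $(x,\infty)$ to $(gx,\infty)$, sends $G_x$-invariant (resp.\ $H_x$-invariant) neighborhoods to $G_{gx}$-invariant (resp.\ $H_{gx}$-invariant) neighborhoods — because $G_{gx} = gG_xg^{-1}$ and likewise $H_{gx} = gH_xg^{-1}$, the latter using normality of $H$ to know $gH_xg^{-1} \subseteq H$ — and carries complements to complements. So, given $x \in X$ and an $H_x$-invariant open neighborhood $U$ of $(x,\infty)$, I would apply the continuous-control property of $E$ at the point $g^{-1}x$ to the $H_{g^{-1}x}$-invariant open neighborhood $g^{-1}U$ of $(g^{-1}x,\infty)$: this yields an $H_{g^{-1}x}$-invariant neighborhood $W \subseteq g^{-1}U$ of $(g^{-1}x,\infty)$ with $((g^{-1}U)^c \times W) \cap E = \emptyset$. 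Setting $V = gW$, one checks $V \subseteq U$ is an $H_x$-invariant neighborhood of $(x,\infty)$ and $(U^c \times V) \cap gE = g\big(((g^{-1}U)^c \times W) \cap E\big) = \emptyset$, since $g^{-1}(U^c) = (g^{-1}U)^c$. This completes the verification.

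\textbf{Main obstacle.} The only subtle point is bookkeeping the isotropy groups correctly: one needs $H_{gx} = gH_xg^{-1}$ as subgroups of $H$, which is exactly where normality of $H$ enters (without it $gH_xg^{-1}$ need not even lie in $H$, and the notion of ``$H_{gx}$-invariant neighborhood'' could not be compared via $g$). Everything else is a routine transport-of-structure argument along the homeomorphism induced by $g$. I do not anticipate any analytic or combinatorial difficulty beyond this.
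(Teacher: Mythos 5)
Your proposal is correct and follows essentially the same route as the paper's proof: the easy verifications of symmetry, bounded $[1,\infty)$-control, and $H$-invariance (via $g^{-1}hg\in H$), and for the continuous-control condition the same transport argument at $g^{-1}x$ using $H_{g^{-1}x}=g^{-1}H_xg$ (where normality enters), pulling back $U$ to $g^{-1}U$, obtaining $V'$ there, and pushing forward $V=gV'$. No gaps.
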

\begin{proof} Since $H$ is a normal subgroup of $G$, one sees easily that $H\cdot gE=gE$, hence $gE$ is $H$-invariant. One also easily sees $gE$ is symmetric, bounded in the $[1, \infty)$ direction. Now for any 
$x\in X$, any $H_x$-invariant neighborhood $U$ of $(x, \infty)$ in $X\times [1, \infty]$, we consider $g^{-1}x$. Since $H<G$ is normal, we have $H_{g^{-1}x}=g^{-1}H_xg$. Hence $g^{-1}U$ is an $H_{g^{-1}x}$-invariant neighborhood of 
$(g^{-1}x, \infty)$. Therefore there exists an $H_{g^{-1}x}$-invariant neighborhood $V'\subseteq g^{-1}U$  
of $(g^{-1}x, \infty)$ so that 
$$((g^{-1}U)^c\times V')\cap E=\emptyset$$
Hence
$$(U^c\times gV')\cap gE=\emptyset$$
This completes the proof by letting $V=gV'$ and by noting $V$ is $H_x$-invariant. 
\end{proof}

The  proof of part (iv) in the proof of the following lemma is motivated from the proof of \cite[Lemma 3.3]{BFJR}.

\begin{lem}\label{control} Let $E\in\mathcal E^X_{Hcc}$. For any compact subset $K\subseteq X$, define
$$E'=\{(x, s)\times(x', s')|\ \exists g, g'\in G\ \text{s.t.}\ g^{-1}g'\in H, g^{-1}x, g'^{-1}x'\in K, (g^{-1}x, s)\times(g^{-1}x', s')\in E \}$$
Then $E'\in\mathcal E^X_{Gcc}$.
\end{lem}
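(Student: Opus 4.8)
I would check directly that $E'$ satisfies the four defining conditions (i)--(iv) of Definition \ref{ecc}; three of these are formal, and the work is in condition (i). For symmetry (condition (iii)): if $((y,s),(x',s'))\in E'$ with witnessing data $g,g'\in G$ and $g^{-1}g'=h\in H$, I would use the data $g',g$ instead; the one point that is not immediate, namely $((g'^{-1}x',s'),(g'^{-1}y,s))\in E$, follows from the symmetry of $E$ together with its $H$-invariance applied to $h^{-1}\in H$ (recall $g'^{-1}y=h^{-1}g^{-1}y$). For boundedness in the $[1,\infty)$-direction (condition (ii)): the constant $\alpha>0$ witnessing it for $E$ works verbatim for $E'$, since the defining membership of $E'$ is an $E$-membership with the same pair of $[1,\infty)$-coordinates. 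For $G$-invariance (condition (iv)): given $\delta\in G$, replace $g,g'$ by $\delta g,\delta g'$; the quantities $g^{-1}g'$, $g^{-1}y$, $g'^{-1}x'$ and the $E$-pair $((g^{-1}y,s),(g^{-1}x',s'))$ are all unaffected.

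The content is condition (i), equivariant continuous control, and here I would follow the template of \cite[Lemma 3.3]{BFJR}. The key preliminary move is to normalize a bad pair: if $((y,s),(x',s'))\in E'$ with data $g,g'$, then applying the $H$-invariance of $E$ to $h^{-1}\in H$ rewrites the defining membership $((g^{-1}y,s),(g^{-1}x',s'))\in E$ as $((g'^{-1}y,s),(g'^{-1}x',s'))\in E$, and the second foot $g'^{-1}x'$ now lies in the compact set $K$. Thus, after translating by $g'^{-1}$, every $E'$-pair becomes an $E$-pair with one foot anchored in $K$. Now fix $x\in X$ and a $G_x$-invariant open neighborhood $U$ of $(x,\infty)$, and pick a product neighborhood $W_U\times(N_U,\infty]\subseteq U$. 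The goal is to produce $V=W\times(N,\infty]$, with $W\ni x$ open and $G_x$-invariant, $W\subseteq W_U$, such that $(U^c\times V)\cap E'=\emptyset$. Assuming a bad pair $((y,s),(x',s'))\in(U^c\times V)\cap E'$ with data $g'$ and $q_0:=g'^{-1}x'\in K$: since $x'\in W\subseteq W_U$ we get $(x',\infty)\in U$, hence $(q_0,\infty)\in g'^{-1}U$; inside the open set $g'^{-1}U$ I would choose an $H_{q_0}$-invariant open neighborhood $\mathcal U$ of $(q_0,\infty)$ (these exist by the local structure of the $G$-CW complex $X$), and apply the continuous control of $E$ at $q_0$ to get an $H_{q_0}$-invariant $\mathcal V\subseteq\mathcal U$ with $(\mathcal U^c\times\mathcal V)\cap E=\emptyset$. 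Once $N$ is large enough that $(q_0,s')\in\mathcal V$, the membership $((g'^{-1}y,s),(q_0,s'))\in E$ forces $(g'^{-1}y,s)\in\mathcal U\subseteq g'^{-1}U$, i.e.\ $(y,s)\in U$, contradicting $(y,s)\in U^c$.

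The main obstacle is that the auxiliary data $\mathcal U,\mathcal V,N$ produced in the previous paragraph a priori depend on both the anchor point $q_0\in K$ and the group element $g'$, and the set of $g'$ that can occur is in general infinite; the real task is to choose the single neighborhood $V$ -- in particular $W$ and $N$ -- so that the argument goes through for all of them at once. This is exactly where compactness of $K$ is used: $K$ meets only finitely many cells of $X$, hence only finitely many $G$-orbits of cells, and the $G$-invariance of $E$ transports the continuous-control data along the action, so the choices can be made uniformly on each orbit; combined with a sufficiently small $G_x$-invariant $W\subseteq W_U$, this pins down $V$. Carrying out this uniformization carefully -- reproducing, in the present relative setting in which $E$ is only assumed continuously controlled with respect to the subgroup $H\le G$, the compactness bookkeeping of \cite[Lemma 3.3]{BFJR} -- is the technical heart of the proof; everything else is routine.
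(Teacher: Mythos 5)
Your verifications of symmetry, bounded control in the $[1,\infty)$-direction, and $G$-invariance are correct and agree with the paper's. The gap is in the equivariant continuous control condition (i), which is the whole content of the lemma. You correctly isolate the difficulty -- the auxiliary data $\mathcal U,\mathcal V,N$ depend on the anchor point $q_0=g'^{-1}x'\in K$ and on the group element $g'$, and infinitely many $g'$ can occur -- but you do not resolve it: you explicitly defer the uniformization as ``the technical heart.'' Moreover, the one mechanism you name for it is unavailable: you invoke ``the $G$-invariance of $E$'' to transport control data along orbits, but the hypothesis is only $E\in\mathcal E^X_{Hcc}$, so $E$ is merely $H$-invariant; the failure of $G$-invariance of $E$ is exactly what makes this lemma nontrivial (you acknowledge this ``relative setting'' yourself and then rely on $G$-invariance anyway). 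There is a further unjustified step even for a single bad pair: to apply Definition \ref{ecc}(i) for $E$ at $q_0$ you need an $H_{q_0}$-invariant neighborhood of $(q_0,\infty)$ contained in $g'^{-1}U$, but $g'^{-1}U$ is only $(g'^{-1}G_xg')$-invariant and $H_{q_0}$ need not lie in that conjugate; and the threshold this produces depends on $g'$, not merely on which of the finitely many $G$-orbits of cells contains $q_0$. So ``$K$ meets finitely many orbits of cells'' does not reduce the problem to finitely many choices, and your $W$ and $N$ are never actually pinned down.

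The paper sidesteps the uniformization entirely by a sequential contradiction argument. Assuming control fails at $x_0$ for some $G_{x_0}$-invariant $U$ and some $r$, it takes a descending family $V^k$ of $G_{x_0}$-invariant neighborhoods from the Slice theorem, with $gV^k\cap V^k=\emptyset$ for $g\notin G_{x_0}$ and $\bigcap_k\overline{G\cdot V^k}=G\cdot x_0$, extracts bad pairs $(x_k,s_k)\times(x_k',s_k')$ with witnesses $g_k,g_k'$, and uses compactness of $K$ to pass to a subsequence with $g_k'^{-1}x_k'\to y$. The slice property then forces $y=gx_0\in G\cdot x_0$, so $gU$ is $G_y$-invariant, in particular $H_y$-invariant, and a single application of the $H$-control of $E$ at $y$ -- to the pairs $(g_k'^{-1}x_k,s_k)\times(g_k'^{-1}x_k',s_k')\in E$, obtained as in your normalization via $H$-invariance -- gives the contradiction. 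The point your sketch misses is that the relevant anchors accumulate only at the orbit $G\cdot x_0$, where the translated neighborhood is automatically invariant under the full isotropy group, so the control hypothesis on $E$ need only be used once, at the limit point. Without this limiting step (or an honest substitute establishing uniformity in $g'$), your proof is incomplete.
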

\begin{proof} (i) $E'$ is symmetric: suppose $(x,s)\times(x',s')\in E'$, then we have $g, g'\in G$ with the properties in the definition of $E'$.  Now since $E$ is symmetric,  we have $(g^{-1}x', s')\times(g^{-1}x, s)\in E$.  Since $E$ is $H$-invariant and $g^{-1}g'\in H$, we get $(g'^{-1}x', s')\times (g'^{-1}x, s)\in E$. This shows $g', g$ fullfill the  properties for the pair $(x',s')\times(x,s)$ in the definition of $E'$. Hence $E'$ is symmetric;

(ii) $E'$ is $G$-invariant: suppose $(x,s)\times(x',s')\in E'$ and $l\in G$. Let $g, g'$ be as before. One easily verifies that $lg, lg'$ fulfill the properties in the definition of $E'$ for $(lx, s)\times (lx', s')$.  Hence $(lx, s)\times (lx', s')\in E'$, which shows $E'$ is $G$-invariant;

(iii) Bounded control in the $s$-direction: this is clear since $E$ is controlled;

(iv) $E'$ is $G$-equivariantly continuously controlled: suppose not, then there exists $x_0\in X$,  a $G_{x_0}$-invariant open neighborhood $U$ of $x_0\in X$ and $r>0$, such that for all $G_{x_0}$-invariant open neighborhood $V\subseteq U$ of $x_0$ in $X$ and all $l>r$, we have
$$\big(U\times (r, \infty]\big)^c\times\big(V\times(l, \infty]\big)\cap E'\ne \emptyset$$

Now since $X$ is a $G$-CW complex, the \textit{Slice theorem} (see \cite[Proposition 3.4]{BFJR}) applies, so that we can find a descending sequence $\{V^k\}_{k\in\mathbb N}$ of open neighborhoods of $x_0$ in $X$ with the following property: 

(a) Each $V^k$  is $G_{x_0}$-invariant;

(b) $gV^k\cap V^k=\emptyset$ if $g\notin G_{x_0}$;

(c) $\bigcap_{k\ge 1}\overline{G\cdot V^k}=G\cdot x_{0}$.

\vskip 10pt
We may assume $V^k\subseteq U, \forall k\in\bbN$. Hence  
\begin{align}\label{1}
[(U\times (r, \infty])^c\times (V^k\times (k+r, \infty])]\cap E'\ne\emptyset
\end{align}
thus we can find a sequence
\begin{align}\label{2}
 (x_k, s_k)\times(x_k', s_k')\in[(U\times (r, \infty])^c\times (V^k\times (k+r, \infty])]\cap E'
\end{align}
 By the definition of $E'$, there exist $g_k, g_k'$ such that 
\begin{align}\label{3}
g_k^{-1}g_k'\in H,\ g_k^{-1}x_k, g'{_k}^{-1}x_k'\in K,\ (g_k^{-1}x_k, s_k)\times(g_k^{-1}x_k', s_k')\in E
\end{align}
Since $E$ is $H$-invariant and $g_k^{-1}g_k'\in H$, the above also implies 
\begin{align}\label{4}
(g_k'^{-1}x_k, s_k)\times(g_k'^{-1}x_k', s_k')\in E
\end{align}
Now since $K$ is compact, by passing to a subsequence, we may assume $g'{_k}^{-1}x'_k\rightarrow y$.  Now since $x'_k\in V^k$, we have  for every $n\in\mathbb N$ and all $k>n$, $g'{_k}^{-1}x'{_k}\in G\cdot V^k\subseteq G\cdot V^n$, this implies $y\in\overline{G\cdot V^n}$ for all $n$, thus $y\in\bigcap_{n\ge 1}\overline{G\cdot V^n}=G\cdot x_0$. Hence there is $g\in G$ so that $y=gx_0$. Therefore $g'{_k}^{-1}x'_k\rightarrow y=gx_0$, so $g^{-1}g'{_k}^{-1}x'_k\rightarrow x_0\in V^1$. Thus when $k$ is large enough $g^{-1}g'{_k}^{-1}\in G_{x_0}$. Note that $s'_k\rightarrow \infty$,  hence by \ref{4} and the control condition in the $s$-direction, we have $s_k\rightarrow\infty$. Now by \ref{2}, when $k$ is large enough, $x_k\notin U$, hence $g^{-1}g'{_k}^{-1}x_k\notin U$ since $g^{-1}g'{_k}^{-1}\in G_{x_0}$ and $U$ is $G_{x_0}$-invariant. Thus when $k$ is large enough, $g'{_k}^{-1}x_k\notin gU$. 

Now consider $gU$, it is $G_{y}$-invariant since $y=gx_0$ and $U$ is $G_{x_0}$-invariant. In particular, $gU$ is  $H_y$-invariant. Now because $E$ is $H$-equivariantly continuously controlled, we can find an $H_y$-invariant open neighborhood $W\subseteq gU$ of $y=gx_0$ in $X$ and $N>r$ such that 
\begin{align}\label{5}
[(gU\times (r, \infty])^c\times (W\times (N, \infty])]\cap E=\emptyset
\end{align}
However, on the one hand, $(g_k'^{-1}x_k, s_k)\times(g_k'^{-1}x_k', s_k')\in E$ for all $k$ by \ref{4}, while on the other hand, when $k$ is large enough, we showed $g'{_k}^{-1}x_k\notin gU$, $g'{_k}^{-1}x'_k\rightarrow y\in W$ and $s_k, s_k'\rightarrow\infty$, so we also have $(g_k'^{-1}x_k, s_k)\times(g_k'^{-1}x_k', s_k')\in (gU\times (r, \infty])^c\times (W\times (N, \infty])$.  Thus when $k$ is large enough, we have
\begin{align}\label{6}
(g_k'^{-1}x_k, s_k)\times(g_k'^{-1}x_k', s_k')\in [(gU\times (r, \infty])^c\times (W\times (N, \infty])]\cap E
\end{align}
This contradicts to \ref{5} and we complete the proof.
\end{proof}


\section{Equivalence of two categories}\label{equiob}
The major goal of this section is to prove Theorem \ref{equivalence}.  Let us firstly make some preparations. Let $\mathcal A$ be a right $G$-additive category. For any left $G$-set $X$, Bartels and Reich \cite{BR} defined a new additive category $\mathcal A*_GX$. The special case when $X=pt$ will be important
in our treatment and we denote it by $\mathcal A_{\alpha}[G]$, where $\alpha$ denotes the right $G$-action on $\mathcal A$.  We now recall its definition here. 

\begin{defn}\label{groupcat}(\cite[Definition 2.1]{BR}) Objects of the category $\mathcal A_{\alpha}[G]$ are the same as the objects of $\mathcal A$. A morphism $\phi: A\rightarrow B$ from $A$ to $B$ in $\mathcal A _\alpha[G]$
is a formal sum $\phi=\sum_{g\in G}\phi^g\cdot g$, where $\phi^g: A\rightarrow g^*B$ is a morphism in $\mathcal A$ and there are only finitely many $g\in G$ with $\phi^g\not=0$. Addition of morphisms is defined in
the obvious way.  Composition of morphisms is defined as follows: let $\phi=\sum_{k\in G}\phi^k\cdot k: A\rightarrow B$ be a morphism from $A$ to $B$ and $\psi=\sum_{h\in G}\psi^h\cdot h: B\rightarrow C$ be a morphism
from $B$ to $C$, their composition is given by 
$$\psi\circ\phi:=\sum_{g\in G}\big(\sum_{k,h\in G, g=hk}k^{*}(\psi^h)\circ \phi^k \big)\cdot g$$
\end{defn}

\vskip 10pt

Now let $F$ and $G$ be two groups.  Suppose there is a group homomorphism $\alpha: F\rightarrow Aut(G), f\mapsto\alpha_f$.  We then can form the associated semi-direct product $\Gamma=G\rtimes_{\alpha}F$. Every element  
$\r\in\Gamma$ can be uniquely written as $\r=gf$ for some $g\in G$ and $f\in F$.  We have $gfg'f'=g\alpha_{f}(g')ff'$. In particular $\alpha_f(g)=fgf^{-1}$.  Now let $X$ be a left $\Gamma$-CW complex and $\mathcal A$ be a right $\Gamma$-additive category (with or without involution). Since $F$ and $G$ are subgroups of $\Gamma$, they naturally inherit actions on $X$ and $\mathcal A$.  So we  can consider the obstruction category $\mathcal O^G(X; \mathcal A)$. Recall from Definition \ref{obstruction} that 
$$\mathcal O^\Gamma(X; \mathcal A)=\mathcal C^\Gamma(\Gamma\times X\times [1,\infty), (p\times p)^{-1}\mathcal E^X_{\Gamma cc}\cap (r\times r)^{-1}\mathcal E_\Gamma, q^{-1}\mathcal F_{\Gamma c}; \mathcal A)$$
$$\mathcal O^G(X; \mathcal A)=\mathcal C^G(G\times X\times [1,\infty), (p\times p)^{-1}\mathcal E^X_{Gcc}\cap (r\times r)^{-1}\mathcal E_G, q^{-1}\mathcal F_{Gc}; \mathcal A)$$

In what follows, we first show in Lemma \ref{auto} that the actions of $F$ on $G, X$ and $\mathcal A$ induce a right $F$-action  on the additive category $\mathcal O^G(X; \mathcal A)$ (which, by abuse of notation, will also be denoted by $\alpha$). We then can form the additive category $\mathcal O^G(X;\mathcal A)_{\alpha}[F]$.  We prove in Theorem \ref{equivalence}  that the two additive categories, $\mathcal O^{\Gamma}(X; \mathcal A)$ and $\mathcal O^G(X;\mathcal A)_{\alpha}[F]$, are equivalent under some assumptions. In particular, these assumptions are satisfied if $F$ is finite or if the action of $\Gamma$ on
$X$ is free. 

In the proofs of  Lemma \ref{auto} and Theorem \ref{equivalence}, we will check everything very carefully. One might want to jump to Remark \ref{inter} first for a simple interpretation 
of the somewhat complicated formulas in the lemma below. 

\begin{lem}\label{auto} Let $f\in F$. For any objects $A, B$  and morphism $\phi: A\rightarrow B$ in $\mathcal O^G(X; \mathcal A)$, the formulas 
$$(\alpha^f A)_{(g, x, s)}:=f^*(A_{(\alpha_f(g), fx, s)})$$
$$(\alpha^f\phi)_{(g', x', s'), (g, x, s)}:=f^*(\phi_{(\alpha_f(g'), fx', s'), (\alpha_f(g), fx, s)})$$
where $(g, x, s), (g', x', s')\in G\times X\times [1, \infty)$, define an additive functor $\alpha^f: \mathcal O^G(X; \mathcal A)\rightarrow\mathcal O^G(X;\mathcal A)$.  Moreover, the assignment $f\mapsto\alpha^f$ defines a right $F$-action on the  obstruction category $\mathcal O^G(X; \mathcal A)$.
\begin{proof} The major part is to check for each $f\in F$,  $\alpha^f: \mathcal O^G(X; \mathcal A)\rightarrow\mathcal O^G(X; \mathcal A)$ is a well-defined additive functor. As soon as this is done, it is easy to see $\alpha$ defines a right $F$-action on $\mathcal O^G(X; \mathcal A)$.  We will omit the $s$-component in places where it is not important for the proof.

\noindent
(i) $\alpha^f A$ is $G$-invariant: for any $l\in G, (g, x)\in G\times X$, we have
\begin{align*}
 \big(l^*(\alpha^f A)\big)_{(g, x)}
&=l^*\big((\alpha^f A)_{(lg, lx)}\big)\\
&=l^{*}\big(f^*(A_{(\alpha_f(lg), flx)})\big)\\
&=(fl)^{*}\big(A_{(\alpha_f(lg), flx)}\big)\\
&=\big(\alpha_f(l)f\big)^{*}\big(A_{(\alpha_f(lg), flx)}\big)\\
&=f^*\big(\alpha_f(l)^*(A_{(\alpha_f(lg), flx)})\big)\\
&=f^*\big((\alpha_f(l)^{*}A)_{(\alpha_f(g), \alpha_f(l^{-1})flx)}\big)\\
&=f^*\big(A_{(\alpha_f(g), \alpha_f(l^{-1})flx)}\big)\ \ \ \ \ \ \ \ \ \  \ \ \text{Since}\ A\ \text{is}\ G\text{-invariant}\\
&=f^{*}\big(A_{(\alpha_f(g), fx)}\big)\\
&=(\alpha^fA)_{(g, x)}
\end{align*}
Thus $l^*(\alpha^f A)=\alpha^f A$ for all $l\in G$. This proves $\alpha^f A$ is $G$-invariant for all $f\in F$. 
\vskip 10pt
\noindent
(ii) $\alpha^f \phi$ is $G$-invariant: for any $l\in G, (g, x), (g', x')\in G\times X$, we have
\begin{align*}
\big(l^*(\alpha^f\phi)\big)_{(g', x'), (g, x)}
&=l^*[(\alpha^f\phi)_{(lg', lx'), (lg, lx)}]\\
&=l^*[f^*(\phi_{(\alpha_f(lg'), flx'), (\alpha_f(lg), flx)})]\\
&=(fl)^*[\phi_{(\alpha_f(lg'), flx'), (\alpha_f(lg), flx)}]\\
&=(\alpha_f(l)f)^*[\phi_{(\alpha_f(lg'), flx'), (\alpha_f(lg), flx)}]\\
&=f^*[\alpha_f(l)^*(\phi_{(\alpha_f(lg'), flx'), (\alpha_f(lg), flx)})]\\
&=f^*[(\alpha_f(l)^*\phi)_{(\alpha_f(g'), \alpha_f(l^{-1})flx'), (\alpha_f(g), \alpha_f(l^{-1})flx)}]\\
&=f^*[\phi_{(\alpha_f(g'), fx'), (\alpha_f(g), fx)}]\\
&=(\alpha^f\phi)_{(g', x'), (g, x)}\\
\end{align*}
Thus $l^*(\alpha^f\phi)=\alpha^f\phi$ for all $l\in G$.  This proves $\alpha^f\phi$ is $G$-invariant for all $f\in F$.
\vskip 10pt
\noindent
(iii) $\alpha^f (id_A)=id_{\alpha^f A}$: this is easy.

\vskip 10pt
\noindent
(iv) $\alpha^f(\phi\circ \psi)=(\alpha^f \phi)\circ(\alpha^f \psi)$:  in the following identities, we are taking sum over $(l, y)\in G\times X$. We have
\begin{align*}[\alpha^f(\phi\circ \psi)]_{(g', x'), (g, x)}&=f^{*}[(\phi\circ \psi)_{(\alpha_f(g'), fx'), (\alpha_f(g), fx)}]\\
&=f^*[\phi_{(\alpha_f(g'), fx'), (l, y)}\circ \psi_{(l, y), (\alpha_f(g), fx)}]\\
&=f^*[\phi_{(\alpha_f(g'), fx'), (l, y)}]\circ f^*[\psi_{(l, y), (\alpha_f(g), fx)}]\\
&=(\alpha^f \phi)_{(g', x'), (\alpha_{f^{-1}}(l), f^{-1}y)}\circ (\alpha^f \psi)_{(\alpha_{f^{-1}}(l), f^{-1}y), (g, x)}\\
&=[(\alpha^f \phi)\circ(\alpha^f \psi)]_{(g',x'),(g,x)}
\end{align*}
The last identity holds because $(\alpha_{f^{-1}}(l), f^{-1}y)$ runs over $G\times X$ as $(l, y)$ runs over $G\times X$. We thus get $\alpha^f(\phi\circ\psi)=(\alpha^f \phi)\circ(\alpha^f\psi)$ 
\vskip 10pt
\noindent
(v) Additivity: this is easy.

\vskip 10pt
\noindent
(vi) Object support: we have to show $\{g^{-1}x | (g, x, s)\in \text{supp}(\alpha^f A)\}$ is contained in a compact subset of $X$.  Note that supp$(\alpha^f A)=\{(g,x,s) | (\alpha_f(g), fx, s)\in \text{supp}(A)\}$. By definition, there exists a compact subset $K\subseteq X$, so that $\{\alpha_f(g^{-1})fx=fg^{-1}x | (\alpha_f(g), fx, s)\in \text{supp}(A)\}\subseteq K$. This implies $\{g^{-1}x | (g, x, s)\in \text{supp}(\alpha^f A)\}=\{g^{-1}x | (\alpha_f(g), fx, s)\in \text{supp}(A)\}\subseteq f^{-1}\cdot K$, which completes the proof since $f^{-1}\cdot K\subseteq X$ is compact. 

\vskip 10pt
\noindent
(vii) Morphism support: by definition, there exists $E\in\mathcal E^X_{Gcc}$, so that the projection of supp$(\phi)$ in $(X\times [1, \infty))^2$ is contained in $E$.
Now by definition of $\alpha^f \phi$, the projection of supp$(\alpha^f \phi)$ in $(X\times [1, \infty))^2$ is contained in 
$$E'=\{(x, s)\times(x', s') | (fx, s)\times (fx', s')\in E\}=f^{-1}\cdot E$$
Now since $G<\Gamma$ is a normal subgroup, by Lemma \ref{normal}, we have $E'=f^{-1}\cdot E\in\mathcal E^X_{Gcc}$. Hence $\alpha^f \phi$ respects the support condition in the $X\times [1, \infty)$-direction. But clearly
it respects the support condition in the $G$-direction since $\alpha_f: G\rightarrow G$ is an automorphism. Therefore $\alpha^f \phi$ respects the morphism support condition.

\vskip 10pt
Now if $\mathcal A$ has an involution, then $\alpha^f$ commutes with the induced  involution on $\mathcal O^G(X; \mathcal A)$. 
This together with the above verification work shows that for each $f\in F$, $\alpha^f$ is a well-defined additive functor from $\mathcal O^G(X; \mathcal A)$ to itself. 

Finally $\alpha^e$ is the identity functor, where $e\in F$ is the trivial element and for $f, f'\in F$ , we have 
\begin{align*}
(\alpha^{ff'}A)_{(g,x)}&=(ff')^*[A_{(\alpha_{ff'}(g), ff'x)}]\\
&=(f')^*[f^*(A_{(\alpha_{ff'}(g), ff'x)})]\\
&=(f')^*[(\alpha^{f}A)_{(\alpha_{f'}(g)), f'x)}]\\
&=\big(\alpha^{f'}(\alpha^{f}A)\big)_{(g, x)}\\
\end{align*}
Therefore $\alpha^{ff'}=\alpha^{f'}\alpha^f$ on objects. Similarly this identity holds on morphisms. Hence $f\mapsto\alpha^f$ defines a right $F$-action on $\mathcal O^G(X;\mathcal A)$. We thus complete the proof of the lemma.
\end{proof}
\end{lem}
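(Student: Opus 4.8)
The plan is to check, one structural ingredient at a time, that the displayed formulas are compatible with everything packaged into the definition of $\mathcal O^G(X;\mathcal A)$. Recall that this category is the $G$-fixed subcategory of a controlled category over $G\times X\times[1,\infty)$, so there are four things to verify: (a) $\alpha^f A$ and $\alpha^f\phi$ are $G$-invariant, so that they genuinely are an object and a morphism of $\mathcal O^G(X;\mathcal A)$; (b) $\alpha^f$ is an additive functor, i.e.\ it preserves identities, composition and sums; (c) $\alpha^f$ respects the object support condition (objects are supported on a $G$-cocompact set $G\cdot K$) and the morphism support condition (morphisms are controlled by a member of $(p\times p)^{-1}\mathcal E^X_{Gcc}\cap(r\times r)^{-1}\mathcal E_G$); and (d) when $\mathcal A$ carries an involution, $\alpha^f$ commutes with the induced involution on $\mathcal O^G(X;\mathcal A)$. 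Finally one checks that $f\mapsto\alpha^f$ is a \emph{right} action, i.e.\ $\alpha^e$ is the identity functor and $\alpha^{ff'}=\alpha^{f'}\circ\alpha^f$.

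For (a) and (b) the only tool needed is the identity $f\ell=\alpha_f(\ell)f$ in $\Gamma$ (equivalently $\alpha_f(\ell)=f\ell f^{-1}$) for $\ell\in G$, together with the fact that $g\mapsto g^*$ is a functor on $\mathcal A$ with $(f\ell)^*=\ell^*\circ f^*$. Thus, to see $\ell^*(\alpha^f A)=\alpha^f A$ for $\ell\in G$, one rewrites $\ell^*\circ f^*=(f\ell)^*=(\alpha_f(\ell)f)^*=f^*\circ\alpha_f(\ell)^*$ and then invokes the $G$-invariance of $A$; the identical manipulation handles $\alpha^f\phi$. For composition one expands the matrix-multiplication formula defining $\phi\circ\psi$, pushes $f^*$ through the (finite) internal sum, and reindexes that sum over $G\times X$ by the bijection $(\ell,y)\mapsto(\alpha_{f^{-1}}(\ell),f^{-1}y)$; additivity and preservation of identities are immediate.

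For (c), on objects the assignment $(g,x)\mapsto(\alpha_f(g),fx)$ is, modulo the $\alpha_f$-twist, left translation by $f$, so $\{\,g^{-1}x:(g,x,s)\in\text{supp}(\alpha^f A)\,\}\subseteq f^{-1}K$ whenever $\{\,h^{-1}y:(h,y,s)\in\text{supp}(A)\,\}\subseteq K$, and $f^{-1}K$ is compact because $f$ acts on $X$ by a homeomorphism. On morphisms, if the $(X\times[1,\infty))^2$-projection of $\text{supp}(\phi)$ lies in some $E\in\mathcal E^X_{Gcc}$, then that of $\text{supp}(\alpha^f\phi)$ lies in $f^{-1}\cdot E$, and since $G$ is normal in $\Gamma$, Lemma \ref{normal} gives $f^{-1}\cdot E\in\mathcal E^X_{Gcc}$. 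The $\mathcal E_G$-part of the control is preserved because $\alpha_f$ is an automorphism of $G$, hence carries a finite set $S$ witnessing the $\mathcal E_G$-condition to the finite set $\alpha_f(S)$. Item (d) is formal, and for the action axioms $\alpha^e$ is visibly the identity, while $\alpha^{ff'}=\alpha^{f'}\circ\alpha^f$ follows from $(ff')^*=(f')^*\circ f^*$ together with $\alpha_{ff'}=\alpha_f\circ\alpha_{f'}$.

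The bookkeeping with the twist is the only real source of friction: one must be disciplined about when to use $f\ell=\alpha_f(\ell)f$ versus $\ell f=f\alpha_{f^{-1}}(\ell)$, and the reindexing bijection in the composition step must be exactly the right one. The sole place an earlier result is genuinely needed is the morphism support check, where normality of $G$ in $\Gamma$ feeds Lemma \ref{normal}; everything else is a direct, if somewhat lengthy, verification.
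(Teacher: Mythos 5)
Your proposal is correct and follows essentially the same route as the paper: the same use of $f\ell=\alpha_f(\ell)f$ with $(f\ell)^*=\ell^*\circ f^*$ for $G$-invariance, the same reindexing bijection $(\ell,y)\mapsto(\alpha_{f^{-1}}(\ell),f^{-1}y)$ for compatibility with composition, the same $f^{-1}\cdot K$ and $f^{-1}\cdot E$ arguments (with Lemma \ref{normal} via normality of $G$) for the support conditions, and the same verification of the right-action axioms. The only quibble is cosmetic: the finite set witnessing the $\mathcal E_G$-condition for $\alpha^f\phi$ is $\alpha_{f^{-1}}(S)$ rather than $\alpha_f(S)$, which of course does not affect the argument.
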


\begin{rem} We have used $\alpha$ to denote both the action of $F$ on $G$ and the action of $F$ on $\mathcal O^G(X; \mathcal A)$. Note however that the former action is from left while the later action is from right. It will be clear from the context which action $\alpha$ stands for. 
\end{rem}

\begin{rem}\label{inter}
A convenient way to interpret the defining formulas in the above Lemma is as follows: for any object $A$ in $\mathcal O^G(X; \mathcal A)$, it is completely determined by its values at points of the form $(e, x, s)\in G\times X\times [1,\infty)$, where $e\in G$ is the identity element. But  $(e, x, s)$ is a point in $\Gamma\times X\times [1, \infty)$, using the $\Gamma$-action on $\Gamma\times X\times [1, \infty)$, $A_{(e, x, s)}$ can be used to define an object $\tau A$ living over $\Gamma\times X\times [1, \infty)$, i.e. by defining $(\tau A)_{(\r, x, s)}:=(\r^{-1})^{*}(A_{(e, \r^{-1}x, s)})$ ($\tau A$ is actually an object in $\mathcal O^{\Gamma}(X; \mathcal A)$ that we will show shortly). Then $A$ is just the restriction of $\tau A$ to $G\times X\times [1,\infty)$. Note for any $f\in F$, the map $(\r, x, s)\mapsto (\r f, x, s)$ is a $\Gamma$-equivariant self-homeomorphism of $\Gamma\times X\times [1, \infty)$, it thus induces an automorphism of $\mathcal O^{\Gamma}(X; \mathcal A)$. Denote this automorphism by $\alpha^f$, then $(\alpha^f(\tau A))_{(\r, x, s)}=(\tau A)_{(\r f^{-1},  x, s)}=f^{*}((\tau A)_{(f\r f^{-1}, fx, s)})$, the second equality is due to the  $\Gamma$-invariance of $\tau A$.  This formula restricts to $G\times X\times [1, \infty)$ is the defining formula on objects in the above lemma. From this point of view, Lemma \ref{auto} is morally apparent. 
\end{rem}

\begin{rem}
The naive definition $(\alpha^f A)_{(g, x, s)}:=A_{(\alpha_f(g), x, s)}$ doesn't work since the map $(g, x, s)\mapsto (\alpha_f(g), x, s)$ is not $G$-equivariant.
\end{rem}

We can now form the category $\mathcal O^G(X; \mathcal A)_{\alpha}[F]$ and are going to show $\mathcal O^{\Gamma}(X;\mathcal A)$ and $\mathcal O^G(X; \mathcal A)_{\alpha}[F]$ are equivalent as additive categories (with involution) under some conditions. Forgetting about the control
conditions, this is then not very hard to see intuitively. The one-one correspondence between objects of these two categories has already been explained in Remark \ref{inter}. Let us focus on
morphisms. By $\Gamma$-invariance, every morphism $\phi: A\rightarrow B$ in $\mathcal O^{\Gamma}(X; \mathcal A)$ is uniquely determined by its values on $(e, x, s)\times (\r', x', s')$, i.e. by $\phi_{(\r', x', s'), (e, x, s)}: A_{(e, x, s)}\rightarrow B_{(\r', x', s')}$. They can be grouped into families $\{\phi_{(gf, x', s'), (e, x, s)} |\ g\in G\}, f\in F$. Due to the morphism control condition in the $\Gamma$-direction, see Definition \ref{obstruction}, there are only finitely many $f\in F$ whose corresponding family is non-trivial.  For each of such $f$, the family indexed by $f$ determines a morphism $\phi^f: A\rightarrow \alpha^fB$ in $\mathcal O^G(X; \mathcal A)$, here we are viewing objects $A$ and $B$ as objects in $\mathcal O^G(X; \mathcal A)$ by restriction to $G\times X\times[1, \infty)$. The map $\phi\rightarrow\sum_{f\in F}\phi^f\cdot f$ then gives a one-one correspondence between  morphisms of these two category. 

\begin{thm}\label{equivalence} Let $\Gamma=G\rtimes_\alpha F$ be as before. Then for any $\Gamma$- CW complex $X$ and any additive category (with involution) $\mathcal A$ with a right $\Gamma$-action, the two additive categories (with involutions) $\mathcal O^{\Gamma}(X;\mathcal A)$ and $\mathcal O^G(X; \mathcal A)_{\alpha}[F]$ are equivalent provided $\mathcal E^X_{\Gamma cc}\subseteq\mathcal E^X_{Gcc}$. In particular, they are equivalent if $F$ is finite or if the action of $\Gamma$ on $X$ is free. 
\end{thm}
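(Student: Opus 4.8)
The plan is to construct two additive functors, compatible with the involutions when $\mathcal{A}$ carries one,
\[
\Theta\colon\mathcal{O}^{\Gamma}(X;\mathcal{A})\longrightarrow\mathcal{O}^{G}(X;\mathcal{A})_{\alpha}[F],
\qquad
\Psi\colon\mathcal{O}^{G}(X;\mathcal{A})_{\alpha}[F]\longrightarrow\mathcal{O}^{\Gamma}(X;\mathcal{A}),
\]
and to show they are mutually inverse, realizing on the nose the correspondences sketched in Remark \ref{inter}. On objects, $\Theta$ restricts the indexing set from $\Gamma\times X\times[1,\infty)$ to $G\times X\times[1,\infty)$, and $\Psi$ sends $A$ to the object $\tau A$ given by $(\tau A)_{(\r,x,s)}:=(\r^{-1})^{*}\big(A_{(e,\r^{-1}x,s)}\big)$. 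On morphisms, $\Theta$ uses the $\Gamma$-invariance of a morphism $\phi\colon A\to B$ to split it into the families indexed by the $F$-coordinate of its range, producing for each $f\in F$ a morphism $\phi^{f}\colon A|_{G\times X\times[1,\infty)}\to\alpha^{f}\big(B|_{G\times X\times[1,\infty)}\big)$ in $\mathcal{O}^{G}(X;\mathcal{A})$, and sets $\Theta(\phi)=\sum_{f\in F}\phi^{f}\cdot f$; the functor $\Psi$ reassembles such a formal sum into a single $\Gamma$-invariant morphism $\tau\psi$. The exact formulas are forced by Definitions \ref{groupcat} and \ref{obstruction} and by Lemma \ref{auto}, and checking that $\Theta$ and $\Psi$ preserve composition, identities and involutions and that $\Theta\circ\Psi$, $\Psi\circ\Theta$ are identities is a direct but lengthy bookkeeping with $\Gamma$- and $G$-invariance, which I omit here.

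First I would verify that $\Theta$ is well defined, which is where the standing hypothesis is used. The restricted object $A|_{G\times X\times[1,\infty)}$ has locally finite support, and its support is $G$-cocompact because the intersection of a $\Gamma$-cocompact subset of $\Gamma\times X$ with $G\times X$ is $G$-cocompact. For a morphism $\phi$, the $\mathcal{E}_{\Gamma}$-condition forces all but finitely many $\phi^{f}$ to vanish and bounds each surviving family in the $G$-direction, so each $\phi^{f}$ satisfies the $\mathcal{E}_{G}$-condition. In the $X\times[1,\infty)$-direction, if $\operatorname{supp}\phi$ projects into $E\in\mathcal{E}^{X}_{\Gamma cc}$ then the projected support of $\phi^{f}$ lies in an $F$-translate of $E$; since $\mathcal{E}^{X}_{\Gamma cc}\subseteq\mathcal{E}^{X}_{Gcc}$ by hypothesis and $G$ is normal in $\Gamma$, Lemma \ref{normal} returns that translate to $\mathcal{E}^{X}_{Gcc}$. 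Hence $\phi^{f}\in\mathcal{O}^{G}(X;\mathcal{A})$ and $\Theta$ is well defined. That the hypothesis holds when $F$ is finite or $\Gamma$ acts freely on $X$ is Lemma \ref{11}.

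The hard part will be showing that $\Psi$ is well defined, i.e.\ that the reassembled morphism $\tau\psi$ meets the control conditions of $\mathcal{O}^{\Gamma}(X;\mathcal{A})$. The $\mathcal{E}_{\Gamma}$-condition is immediate from the finiteness of the $F$-support of $\psi=\sum_{f}\psi^{f}\cdot f$ and the $\mathcal{E}_{G}$-conditions of the $\psi^{f}$, and $\tau A,\tau B$ have $\Gamma$-cocompact support because $A,B$ have $G$-cocompact support. The delicate condition is equivariant continuous control: if $\operatorname{supp}\psi^{f}$ lies in $E_{f}\in\mathcal{E}^{X}_{Gcc}$, then $\operatorname{supp}\tau\psi$ is assembled, one (of finitely many) $f$-piece at a time, from the $E_{f}$ by $\Gamma$-saturation together with the bounded modification coming from the twist $\alpha^{f}$, and an arbitrary $\Gamma$-saturation of a set in $\mathcal{E}^{X}_{Gcc}$ need not lie in $\mathcal{E}^{X}_{\Gamma cc}$. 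The key point is that $\psi^{f}$ is a morphism between the $G$-cocompactly supported objects $A$ and $\alpha^{f}B$, so there is a single compact $K\subseteq X$ --- built from the object supports of $A$ and of the $\alpha^{f}B$ --- that controls the relevant domain and range points modulo the $G$-action; this places the $f$-piece of $\operatorname{supp}\tau\psi$ inside an $F$-translate of the set manufactured from $E_{f}$ and $K$ by exactly the recipe of Lemma \ref{control}, applied with the normal subgroup $G\triangleleft\Gamma$ in place of the subgroup $H<G$ there. Lemma \ref{control} then gives that this set lies in $\mathcal{E}^{X}_{\Gamma cc}$, and Lemma \ref{normal} absorbs the $F$-translate. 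Since only finitely many $f$ occur and $\mathcal{E}^{X}_{\Gamma cc}$ is a coarse structure, $\operatorname{supp}\tau\psi$ is contained in a member of $\mathcal{E}^{X}_{\Gamma cc}$; hence $\tau\psi$ is a morphism of $\mathcal{O}^{\Gamma}(X;\mathcal{A})$ and $\Psi$ is well defined.

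Finally, $A\mapsto\tau A$ inverts restriction on objects: a $\Gamma$-invariant object is recovered from its restriction to $G\times X\times[1,\infty)$ by the very formula defining $\tau$, and a $G$-invariant object is recovered from $\tau A$ by $G$-invariance; the two morphism assignments are inverse for the same reason. Thus $\Theta$ and $\Psi$ are mutually inverse, so $\mathcal{O}^{\Gamma}(X;\mathcal{A})$ and $\mathcal{O}^{G}(X;\mathcal{A})_{\alpha}[F]$ are equivalent --- indeed isomorphic --- and the equivalence respects involutions. The two stated special cases follow from Lemma \ref{11}.
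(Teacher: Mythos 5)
Your proposal is correct and follows essentially the same route as the paper: your $\Psi$ is exactly the paper's functor $\tau$ (with the delicate $\Gamma$-equivariant continuous-control check done via the compact set coming from $G$-cocompact object supports and Lemma \ref{control} applied to $G\triangleleft\Gamma$), and your $\Theta$ is the paper's restriction/fullness argument packaged as an explicit inverse, with the hypothesis $\mathcal E^X_{\Gamma cc}\subseteq\mathcal E^X_{Gcc}$ used in precisely the same place and the special cases handled by Lemma \ref{11}. The composition/identity/involution verifications you omit are the lengthy but routine computations the paper carries out in detail, so nothing essential is missing.
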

\begin{proof}\label{eq} Define a functor $\tau: \mathcal O^G(X; \mathcal A)_{\alpha}[F]\rightarrow \mathcal O^{\Gamma}(X;\mathcal A)$ as follows:
\vskip 5 pt
\noindent
\textit{on objects:} $$(\tau A)_{(\r, x, s)}:=f^*(A_{(\alpha_f(g), fx, s)})=(\alpha^fA)_{(g, x, s)}$$
 where $(\r, x, s)\in\Gamma\times X\times[1, \infty)$ and $g\in G, f\in F$ are the unique elements so that $\r=gf^{-1}$. One can easily check that $(\tau A)_{(\r, x, s)}=(\r^{-1})^*(A_{(e, \r^{-1}x, s)})$, from which we conclude $\tau A$ is $\Gamma$-invariant.

\vskip 5 pt
\noindent
\textit{on morphisms:} Let $\phi=\ds\sum_{f\in F}\phi^f\cdot f: A\rightarrow B$ be a morphism in $\mathcal O^G(X; \mathcal A)_{\alpha}[F]$, where $\phi^f: A\rightarrow\alpha^fB$ is a morphism in $\mathcal O^G(X; \mathcal A)$. By definition, there are only finitely many $f\in F$ with $\phi^f\ne 0$.  

Define $$\tau \phi: \tau A\rightarrow \tau B$$ to be the sum $\ds\sum_{f\in F}\tau_f\phi^f$,  where
$$\tau_f\phi^f: \tau A\rightarrow\tau B$$ is defined as follows:
$$(\tau_f\phi^f)_{(\r', x', s'), (\r, x, s)}:=\begin{cases} 0 &\mbox{if } \r^{-1}\r'\notin Gf^{-1}\\
(\r^{-1})^{*}(\phi^f_{(\r^{-1}\r'f, \r^{-1}x', s'),(e, \r^{-1}x,s)}) & \mbox{if } \r^{-1}\r'\in Gf^{-1}
\end{cases}$$ 

\vskip 10 pt
We have to check $(\r^{-1})^{*}\big(\phi^f_{(\r^{-1}\r'f, \r^{-1}x', s'),(e, \r^{-1}x,s)}\big)$ is indeed a map from
$(\tau A)_{(\r, x, s)}$ to $(\tau B)_{(\r', x', s')}$.
But this is because 
$$(\tau A)_{(\r, x, s)}=(\r^{-1})^*(A_{(e, \r^{-1}x, s)})$$
$$ (\tau B)_{(\r', x', s')}=(\r^{-1})^*\big((\tau B)_{(\r^{-1}\r', \r^{-1}x', s')}\big)=(\r^{-1})^*\big((\alpha^fB)_{(\r^{-1}\r'f, \r^{-1}x', s')}\big)$$
and $$\phi^f_{(\r^{-1}\r'f, \r^{-1}x', s'),(e, \r^{-1}x,s)}: A_{(e, \r^{-1}x,s)}\longrightarrow(\alpha^fB)_{(\r^{-1}\r'f, \r^{-1}x', s')}$$

\vskip 10pt

The formulas above are motivated by the intuition explained in the paragraph preceding this theorem. We firstly show $\tau$ is well-defined, i.e. it is a genuine additive functor and respects
control conditions on objects and morphisms. We then show $\tau$ actually gives an equivalence of two additive categories. In what follows,  we will again omit
the $s$-component in appropriate places.
\vskip 10 pt
\noindent
(i) $\tau A$ is $\Gamma$-invariant: already checked.

\vskip 10 pt
\noindent
(ii) $\tau\phi$ is $\Gamma$-invariant: it suffices to check for each $f\in F$, $\tau_f\phi^f$ is $\Gamma$-invariant. For any $l\in\Gamma$, we have
$$[l^*(\tau_f \phi^f)]_{(\r', x'), (\r,x)}=l^*[(\tau_f \phi^f)_{(l\r',lx'), (l\r, lx)}]$$
If $\r^{-1}\r'\notin Gf^{-1}$, then $(l\r)^{-1}l\r'\notin Gf^{-1}$
, hence $[l^*(\tau_f \phi^f)]_{(\r', x'), (\r,x)}=0=(\tau_f \phi^f)_{(\r', x'), (\r,x)}$.\\
If $\r^{-1}\r'\in Gf^{-1}$, then 
\begin{align*}
[l^*(\tau_f\phi^f)]_{(\r', x'), (\r,x)}&=l^*[(\tau_f\phi^f)_{(l\r',lx'), (l\r, lx)}]\\
&=l^*[(\r^{-1}l^{-1})^*(\phi^f_{(\r^{-1}\r'f, \r^{-1}x'), (e,\r^{-1}x)})]\\
&=(\r^{-1})^*(\phi^f_{(\r^{-1}\r'f, \r^{-1}x'), (e,\r^{-1}x)})\\
&=(\tau_f\phi^f)_{(\r', x'), (\r, x)}\\
\end{align*}
Thus $l^*(\tau_f\phi^f)=\tau_f\phi^f$, for all $f\in F$ and $l\in\Gamma$. This shows $\tau\phi$ is $\Gamma$-invariant.

\vskip 10pt
\noindent
(iii) $\tau (id_A)=id_{\tau A}$: note $id_A: A\rightarrow A$ in $\mathcal O^G(X; \mathcal A)_{\alpha}[F]$ is given by $id_A\cdot e$. Hence 
\begin{align*}[\tau(id_A)]_{(\r',x'),(\r,x)}
&=\begin{cases} 0 &\mbox{if } \r^{-1}\r'\notin G \\
(\r^{-1})^{*}[(id_{A})_{(\r^{-1}\r', \r^{-1}x'),(e, \r^{-1}x)}] & \mbox{if } \r^{-1}\r'\in G \end{cases} \\
&=\begin{cases} 0 &\mbox{if } (\r,x)\neq (\r', x') \\
(id_{\tau A}){(\r',x'),(\r,x)} & \mbox{if } (\r,x)=(\r',x')
 \end{cases} 
\end{align*}

\vskip 10pt
\noindent
(iv) $\tau (\psi\circ\phi)=(\tau\psi)\circ(\tau\phi)$: Let $\phi=\sum_{f\in F}\phi^f\cdot f: A\rightarrow B$ and $\psi=\sum_{h\in F}\psi^h\cdot h: B\rightarrow C$ be two morphisms in $\mathcal O^G(X;\mathcal A)_\alpha[F]$.  We have
$$\tau(\psi\circ\phi)=\tau\Big(\sum_{k\in F}\big(\sum_{hf=k}(\alpha^f\psi^h)\circ\phi^f\big)\cdot k\Big)=\sum_{k\in F}\sum_{hf=k}\tau_k\big((\alpha^f\psi^h)\circ\phi^f\big)$$
and
$$\tau(\psi)\circ\tau(\phi)=\big(\sum_{h\in F}\tau_h\psi^h\big)\circ\big(\sum_{f\in F}\tau_f\phi^f\big)=\sum_{k\in F}\sum_{hf=k}(\tau_h\psi^h)\circ(\tau_f\phi^f)$$
So it suffices to show $\tau_k\big((\alpha^f\psi^h)\circ\phi^f\big)=(\tau_h\psi^h)\circ(\tau_f\phi^f)$ when $k=hf$.

Now by definition,  when $\r^{-1}\r'\notin Gk^{-1}$,  we have  $$[\tau_k\big((\alpha^f\psi^h)\circ\phi^f\big)]_{(\r', x'), (\r,x)}=0$$ and 
$$ [(\tau_h\psi^h)\circ(\tau_f\phi^f)]_{(\r', x'), (\r, x)}=(\tau_h\psi^h)_{(\r', x'), (\r'', x'')}\circ(\tau_f\phi^f)_{(\r'', x''), (\r, x)}$$
where on the right hand side of the above identity, we are taking sum over $(\r'', x'')\in\Gamma\times X$.  However a term in this sum can be non-zero only when $\r''^{-1}\r'\in Gh^{-1}$ and $\r^{-1}\r''\in Gf^{-1}$, which implies when $\r^{-1}\r'\notin Gk^{-1}$, it must be zero. This verifies 
$[\tau_k\big((\alpha^f\psi^h)\circ\phi^f\big)]_{(\r', x'), (\r,x)}= [(\tau_h\psi^h)\circ(\tau_f\phi^f)]_{(\r', x'), (\r, x)}$ when $\r^{-1}\r'\notin Gk^{-1}$.

When $\r^{-1}\r'\in Gk^{-1}$, on one hand, we have (in the following identities, we are taking sum over $(g'', x'')\in G\times X$.)
\begin{align*}&\ \ \ \ [\tau_k\big((\alpha^f\psi^h)\circ\phi^f\big)]_{(\r', x'), (\r,x)}\\
&=(\r^{-1})^*\big[\big(\alpha^f\psi^h)\circ\phi^f\big)_{(\r^{-1}\r'k, \r^{-1}x'), (e, \r^{-1}x)}\big]\\
&=(\r^{-1})^*[(\alpha^f\psi^h)_{(\r^{-1}\r'k, \r^{-1}x'), (g'', x'')}\circ\phi^f_{(g'',x''), (e, \r^{-1}x)}]\\
&=(\r^{-1})^*\big[f^*(\psi^h_{(\alpha_f(\r^{-1}\r'k), f\r^{-1}x'), (\alpha_f(g''), fx'')})\circ\phi^f_{(g'',x''), (e, \r^{-1}x)}]
\\
&=(\r^{-1})^*\big[\big(\alpha_f(g''^{-1})f\big)^*\big(\psi^h_{(\alpha_f(g''^{-1}\r^{-1}\r'k), \alpha_f(g''^{-1})f\r^{-1}x'), (e, \alpha_f(g''^{-1})fx'')}\circ\phi^f_{(g'',x''), (e, \r^{-1}x)}]\big)\big]\\
&=(\r^{-1})^*\big[\big(fg''^{-1}\big)^*\big(\psi^h_{(\alpha_f(g''^{-1}\r^{-1}\r'k), fg''^{-1}\r^{-1}x'), (e, fg''^{-1}x'')}\circ\phi^f_{(g'', x''), (e, \r^{-1}x)}\big)\big]\\
&=\big(fg''^{-1}\r^{-1}\big)^*\big(\psi^h_{(\alpha_f(g''^{-1}\r^{-1}\r'k), fg''^{-1}\r^{-1}x'), (e, fg''^{-1}x'')}\big)\circ\big(\r^{-1}\big)^*\big(\phi^f_{(g'', x''), (e, \r^{-1}x)}\big)\\
&=(\tau_h\psi^h)_{(\r g''f^{-1}\alpha_f(g''^{-1}\r^{-1}\r'k)h^{-1},\r g''f^{-1}fg''^{-1}\r^{-1}x'),(\r g''f^{-1}, \r g''f^{-1}fg''^{-1}x'')}\circ(\tau_f\phi^f)_{(\r g''f^{-1}, \r x''), (\r, x)}\\
&=(\tau_h\psi^h)_{(\r' kf^{-1}h^{-1}, x'),(\r g''f^{-1}, \r x'')}\circ(\tau_f\phi^f)_{(\r g''f^{-1}, \r x''), (\r, x)}\\
&=(\tau_h\psi^h)_{(\r', x'), (\r g''f^{-1}, \r x'')}\circ(\tau_f\phi^f)_{(\r g''f^{-1}, \r x''), (\r, x)}\\
&=(\tau_h\psi^h)_{(\r', x'), (\r g''f^{-1}, x'')}\circ(\tau_f\phi^f)_{(\r g''f^{-1}, x''), (\r, x)}
\end{align*}
\vskip 10pt
\noindent
The last identity holds because when $x''$ runs over $X$, $\r x''$ also runs over $X$.

Now on the other hand, we have
$$[(\tau_h\psi^h)\circ(\tau_f\phi^f)]_{(\r', x'), (\r, x)}=
(\tau_h\psi^h)_{(\r', x'),(\r'', x'')}\circ(\tau_f\phi^f)_{(\r'', x''),(\r,x)}
$$
Note we are taking sum over $(\r'', x'')\in\Gamma\times X$  on the right hand of the above identity. However only those terms with  $\r^{-1}\r''\in Gf^{-1}$ can be non-zero (note $\r^{-1}\r''\in Gf^{-1}, \r^{-1}\r'\in Gk^{-1}$ together with $k=hf$ imply $\r''^{-1}\r'\in Gh^{-1}$). So we only have to take sum over those terms with  $\r''=\r g''f^{-1}, g''\in G$. 
Hence we have 
\begin{align*}&\ \ \ \ [(\tau_h\psi^h)\circ(\tau_f\phi^f)]_{(\r', x'), (\r, x)}\\
&=(\tau_h\psi^h)_{(\r', x'),(\r g''f^{-1}, x'')}\circ(\tau_f\phi^f)_{(\r g''f^{-1}, x''),(\r,x)}\end{align*}
Therefore $\tau_k\big((\alpha^f\psi^h)\circ\phi^f\big)=(\tau_h\psi^h)\circ(\tau_f\phi^f)$ when $k=hf$. Thus $\tau (\psi\circ\phi)=(\tau\psi)\circ(\tau\phi)$.

\vskip 10pt
\noindent
(v) Additivity: this is easy.

\vskip 10pt
\noindent
(vi) Object support: $(\r,x, s)\in$supp$(\tau A)$ if and only if $(e, \r^{-1}x, s)\in$supp$(A)$. By definition, there is a compact set $K\subseteq X$, so that $\r^{-1}x\in K$ for all $(e, \r^{-1}x, s)\in$supp$(A)$. Hence 
$\r^{-1}x\in K$ for all $(\r, x, s)\in$supp$(\tau A)$. This shows $\tau A$ respects the object support condition.

\vskip 10pt
\noindent
(vii) Morphism support: we firstly show each $\tau_f\phi^f, f\in F$ respects the morphism support condition.  By definition of $\tau_f\phi^f$, its morphism support condition in the $\Gamma$-direction is easily seen to be
satisfied. Let us focus on its morphism support condition in the $X\times [1, \infty)$-direction.  By definition, for $\phi^f: A\rightarrow\alpha^fB$, there exists $E\in\mathcal E^X_{Gcc}$,  such that the projection of supp$(\phi^f)$ in $(X\times [1,\infty))^2$ is contained in $E$. By definition of $\tau_f\phi^f$ and the fact that supp$(A)$ and supp$(\alpha^fB)$ are  $G$-cocompact in the $G\times X$-direction, there exists a compact subset $K\subseteq X$, such that the projection of supp$(\tau_f\phi^f)$ in $(X\times[1, \infty))^2$ is contained in 
$$E'=\{(x, s)\times(x',s')|\ \exists \r, \r'\in\Gamma\ \text{s.t.}\ \r^{-1}\r'\in Gf^{-1}, \r^{-1}x, f^{-1}\r'^{-1}x'\in K, (\r^{-1}x, s)\times(\r^{-1}x', s')\in E \}$$
Replacing $\r'$ by $\r'f^{-1}$, one sees
$$E'=\{(x, s)\times(x',s')|\ \exists \r, \r'\in\Gamma\ \text{s.t.}\ \r^{-1}\r'\in G, \r^{-1}x, \r'^{-1}x'\in K, (\r^{-1}x, s)\times(\r^{-1}x', s')\in E \}$$
But by Lemma \ref{control}, $E'\in\mathcal E^X_{\Gamma cc}$. This shows, for each $f\in F$, $\tau_f\phi^f$ respects the morphism support condition for morphisms in $\mathcal O^\Gamma(X; \mathcal A)$. Therefore $\tau\phi=\ds\sum_{f\in F}\tau_f\phi^f$
respects the morphism support condition in $\mathcal O^\Gamma(X; \mathcal A)$ since supp($\tau\phi$) is contained in the finite union $\bigcup_{f\in F}\text{supp}(\tau_f\phi^f)$. 

\vskip 10pt
We now complete the  verification work  that $\tau$ is a well-defined functor from $\mathcal O^G(X; \mathcal A)_{\alpha}[F]$ to $\mathcal O^{\Gamma}(X;\mathcal A)$. Next we show it is an equivalence of additive categories (with involution).  
\vskip 10pt
\noindent
(a) $\tau$ is full on objects: for any object $A$ in $\mathcal O^{\Gamma}(X;\mathcal A)$, let Res$A$ denote its restriction to $G\times X\times[1, \infty)$, i.e. (Res$A)_{(g, x, s)}:=A_{(g, x, s)}, (g, x, s)\in G\times X\times [1, \infty)$. Clearly 
Res$A$ is an object in $\mathcal O^G(X; \mathcal A)_{\alpha}[F]$. One also easily sees that $\tau(\text{Res}A)=A$. 

\vskip 10pt
\noindent
(b) $\tau$ is full on morphisms: for any objects $A, B$ in  $\mathcal O^G(X; \mathcal A)_{\alpha}[F]$, morphism $\Phi: \tau A\rightarrow \tau B$ in $\mathcal O^{\Gamma}(X;\mathcal A)$, and $f\in F$, define
$$\phi^f: A\rightarrow\alpha^fB$$ by defining
$$\phi^f_{(g', x', s'), (g, x,s)}:=\Phi_{(g'f^{-1}, x',s'), (g, x,s)}$$
Note since
$$\Phi_{(g'f^{-1}, x',s'), (g, x,s)}: (\tau A)_{(g, x, s)}=A_{(g, x, s)}\longrightarrow(\tau B)_{(g'f^{-1}, x', s')}=(\alpha^fB)_{(g', x', s')}$$
we see that $\phi^f_{(g', x', s'), (g, x,s)}$ is indeed a map from $A_{(g, x, s)}$ to $(\alpha^fB)_{(g', x', s')}$.

One easily checks $\phi^f$ is $G$-invariant.  We now check $\phi^f$ respects the morphism support condition for morphisms in $\mathcal O^G(X; \mathcal A)$. By definition, the projection of supp($\phi^f$) in the $X\times [1, \infty)$-direction is the same as the projection of supp($\Phi$) in the $X\times[1, \infty)$-direction. Now by assumption,  $\mathcal E^X_{\Gamma cc}\subseteq\mathcal E^X_{Gcc}$.  Therefore  $\phi^f$ respects the morphism support condition in the $X\times[1,\infty)$-direction.  By the morphism control condition of $\Phi$ in the $\Gamma$-direction, there is a finite set $S\subseteq\Gamma$ with the property that  $\Phi_{(g'f^{-1}, x',s'), (g, x,s)}\ne 0$ implies $g^{-1}g'f^{-1}\in S$. Therefore 
$\phi^f_{(g', x', s'), (g, x,s)}\ne 0$ implies $g^{-1}g'f^{-1}\in S$. This firstly shows  $\phi^f$ respects the morphism support condition in the $G$-direction. Therefore $\phi^f$ is a morphism in $\mathcal O^G(X; \mathcal A)$. Secondly, since $S$ is finite, its image in $F$ under the natural projection $\Gamma\rightarrow F$ is finite. This implies there are only finitely many $f\in F$ with the property that there exist 
$g, g'\in G$ so that $g^{-1}g'f^{-1}\in S$. Hence there are only finitely many $f\in F$ so that $\phi^f\ne 0$.
Therefore 
$$\phi=\ds\sum_{f\in F}\phi^f\cdot f: A\rightarrow B$$ 
defines a morphism in $\mathcal O^G(X; \mathcal A)_{\alpha}[F]$.

We now show $\tau\phi=\Phi$.  For any $(\r', x',s'), (\r, x,s)\in \Gamma\times X\times[1, \infty)$, let $\r^{-1}\r'=gf^{-1}, g\in G, f\in F$.  We then have 
\begin{align*}
(\tau\phi)_{(\r', x',s'), (\r, x,s)}&=(\tau_{f}\phi^f)_{(\r', x', s'),(\r,x,s)}\\
&=(\r^{-1})^*[\phi^f_{(g, \r^{-1}x', s'), (e, \r^{-1} x, s)}]\\
&=(\r^{-1})^*[\Phi_{(gf^{-1}, \r^{-1}x',s'), (e, \r^{-1}x, s)}]\\
&=\Phi_{(\r', x', s'), (\r, x, s)}
\end{align*}
This shows $\tau\phi=\Phi$ and completes the proof that $\tau$ is full on morphisms.

\vskip 10pt
\noindent
(c) $\tau$ is faithful on morphisms: this is easy using the identity $\phi^f_{(g', x', s'), (g, x,s)}:=(\tau \phi)_{(g'f^{-1}, x',s'), (g, x,s)}$.

\vskip 10pt
Therefore $\tau$ is an equivalence between $\mathcal O^G(X; \mathcal A)_\alpha[F]$ and $\mathcal O^{\Gamma}(X; \mathcal A)$  provided $\mathcal E^X_{\Gamma cc}\subseteq\mathcal E^X_{Gcc}$. Now by Lemma \ref{11},  $\mathcal E^X_{\Gamma cc}\subseteq\mathcal E^X_{Gcc}$ if $F$ is finite or the action of $\Gamma$ on $X$ is free. This completes the proof of the theorem.
\end{proof}

\begin{rem} One sees from the proof that the assumption $\mathcal E^X_{\Gamma cc}\subseteq\mathcal E^X_{Gcc}$ is only used to show the functor $\tau$ is full. So $\tau$ is well-defined even without this assumption.
\end{rem}

\section{Proof of the Main theorems}\label{main}
In this section, we prove Theorem A and Theorem B. We firstly prove a lemma on $L$-groups which is another ingredient in proving Theorem A.

\subsection{A Lemma on $L$-groups} In this subsection, we prove Lemma \ref{l group}.  We will need the following well-known fact: for every  group $G$ and every orientation map $w: G\longrightarrow\{\pm 1\}$, if $Wh(G)=0, \tilde{K_0}(\mathbb Z[G])=0$ and $K_i(\mathbb Z[G])=0, i\le -1$,  then all the $L$-groups of $\mathbb Z[G]$ associated to $w$ with various decorations are naturally isomorphic (we will omit $w$ from the notations):
$$L^s_n(\mathbb Z[G])\cong L^h_n(\mathbb Z[G])=L^1_n(\mathbb Z[G])\cong L^0_n(\mathbb Z[G])\cong L^{<-1>}_n(\mathbb Z[G])\cong\cdots\cdots \cong L^{<-\infty>}_n(\mathbb Z[G])$$
where $Wh(G), \tilde{K_0}(\mathbb Z[G])$ and $K_i(\mathbb Z[G],  i\le -1$ are the Whitehead group, 
the reduced $K_0$-group and the negative $K$-groups of $\mathbb Z[G]$  respectively. For an explanation about the decorations of  various $L$-groups and a proof of the above fact, see \cite[Remmark 21 on page 720 and Proposition 23 on page 721]{LR}. 

\begin{lem}\label{l group}  Let $G$ be a group. Assume $Wh(G)=0, \tilde{K_0}(\mathbb Z[G])=0$ and $K_i(\mathbb Z[G])=0, i\le -1$. Then for every group of the form $\Gamma=G\rtimes_{\alpha}\mathbb Z$ and every orientation map $w: \Gamma\longrightarrow\{\pm\}$, 
its simple $L$-groups and ultimate $L$-groups are naturally isomorphic, i.e. $L^s_n(\mathbb Z[\Gamma])\cong L^{<-\infty>}_n(\mathbb Z[\Gamma]), \forall n\in\mathbb Z$, naturally. If in addition $Wh(G\times\mathbb Z)=0$, then $L^h_n(\mathbb Z[\Gamma])$
adds to the natural isomorphisms, i.e. $L^s_n(\mathbb Z[\Gamma])\cong L^h_n(\mathbb Z[\Gamma])\cong L^{<-\infty>}_n(\mathbb Z[\Gamma]), \forall n\in\mathbb Z$, naturally.
\end{lem}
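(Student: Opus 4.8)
The plan is to obtain both statements by comparing the decorations of $L_*(\mathbb Z[\Gamma])$, using two tools: the Ranicki--Rothenberg exact sequences, which govern the forgetful maps $L^s_*\to L^h_*\to L^0_*\to\cdots\to L^{<-\infty>}_*$ (their third terms are the Tate cohomology groups $\hat H^*(\mathbb Z/2;-)$ with coefficients in $Wh(\Gamma)$, $\tilde K_0(\mathbb Z[\Gamma])$ and $K_{-j}(\mathbb Z[\Gamma])$, $\mathbb Z/2$ acting by the $w$-twisted involution); and the Bass--Heller--Swan/Shaneson splitting of the $K$- and $L$-theory of the twisted Laurent ring $\mathbb Z[\Gamma]=\mathbb Z[G]_\alpha[t,t^{-1}]$. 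The standing hypotheses on $G$ are exactly what makes the fact recalled before the lemma applicable to $\mathbb Z[G]$; and since that argument only sees $Wh(G)$, $\tilde K_0(\mathbb Z[G])$ and $K_i(\mathbb Z[G])$ ($i\le -1$) — which are then Tate cohomology of the zero groups no matter what the involution is — the same reasoning shows that all decorations of the $\alpha$-twisted $L$-groups $L_*(\mathbb Z[G];\alpha)$ coincide as well, naturally.

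For the first statement I would invoke Ranicki's twisted Shaneson splitting, a natural isomorphism $L^{<j>}_n(\mathbb Z[\Gamma])\cong L^{<j>}_n(\mathbb Z[G];\alpha)\oplus L^{<j-1>}_{n-1}(\mathbb Z[G];\alpha)$ compatible with the decoration-lowering maps (with the convention that the lower summand carries decoration $<-\infty>$ when $<j>=<-\infty>$). Applying it for $<j>=s$ and for $<j>=-\infty$, and using $L^s_n(\mathbb Z[G];\alpha)\cong L^{<-\infty>}_n(\mathbb Z[G];\alpha)$ and $L^h_{n-1}(\mathbb Z[G];\alpha)\cong L^{<-\infty>}_{n-1}(\mathbb Z[G];\alpha)$ from the decoration collapse above, the forgetful map $L^s_n(\mathbb Z[\Gamma])\to L^{<-\infty>}_n(\mathbb Z[\Gamma])$ becomes a direct sum of two isomorphisms, hence is itself an isomorphism. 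The point of this route is that it passes ``sideways'' through $\mathbb Z[G]$ and never needs to control $Wh(\Gamma)$ or $\hat H^*(\mathbb Z/2;Wh(\Gamma))$. (Alternatively one could argue via the Rothenberg tower directly, computing $\tilde K_0(\mathbb Z[\Gamma])$ and the $K_{-j}(\mathbb Z[\Gamma])$ by the twisted Bass--Heller--Swan formula as sums $\widetilde{\mathrm{Nil}}(\mathbb Z[G];\alpha)\oplus\widetilde{\mathrm{Nil}}(\mathbb Z[G];\alpha^{-1})$ — the invariants/coinvariants terms vanishing by hypothesis — and observing that the involution of $\mathbb Z[\Gamma]$, sending $t\mapsto\pm t^{-1}$, interchanges the two Nil summands, so each of these is an induced $\mathbb Z[\mathbb Z/2]$-module with trivial Tate cohomology.)

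For the second statement it remains to bring $L^h$ into the chain, and for this I would use the Rothenberg sequence $\cdots\to L^s_n(\mathbb Z[\Gamma])\to L^h_n(\mathbb Z[\Gamma])\to\hat H^n(\mathbb Z/2;Wh(\Gamma))\to L^s_{n-1}(\mathbb Z[\Gamma])\to\cdots$: it suffices that $\hat H^*(\mathbb Z/2;Wh(\Gamma))=0$. This is where the extra hypothesis is used. By the Bass--Heller--Swan (Farrell--Hsiang) description of the Whitehead group of a twisted Laurent extension, $Wh(\Gamma)$ is assembled from $Wh(G)$, $\tilde K_0(\mathbb Z[G])$ (both zero) and twisted Nil groups $\widetilde{\mathrm{Nil}}(\mathbb Z[G];\alpha^{\pm1})$; and $Wh(G\times\mathbb Z)=0$, together with the standing hypotheses, is precisely the input needed to conclude that this Nil contribution is trivial, so that $Wh(\Gamma)=0$ and hence $\hat H^*(\mathbb Z/2;Wh(\Gamma))=0$. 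Combined with the first statement this gives $L^s_n(\mathbb Z[\Gamma])\cong L^h_n(\mathbb Z[\Gamma])\cong L^{<-\infty>}_n(\mathbb Z[\Gamma])$ naturally. The step I expect to be the main obstacle is exactly this control of the (twisted) Nil terms — equivalently, the behaviour of the $w$-twisted involution on the Bass--Heller--Swan decomposition of $K_1(\mathbb Z[\Gamma])$, which is more delicate than on $\tilde K_0$ and the negative $K$-groups because of the trivial units $\pm\Gamma$ and the orientation sign — and it is there that the additional input $Wh(G\times\mathbb Z)=0$ becomes necessary.
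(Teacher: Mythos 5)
Your argument for the first statement rests on a ``twisted Shaneson splitting'' $L^{<j>}_n(\mathbb Z[\Gamma])\cong L^{<j>}_n(\mathbb Z[G];\alpha)\oplus L^{<j-1>}_{n-1}(\mathbb Z[G];\alpha)$, and no such direct-sum decomposition exists when $\alpha$ is nontrivial: the Shaneson splitting is special to the product $G\times\mathbb Z$, where the retraction $G\times\mathbb Z\to G$ splits the Wang sequence. For a twisted Laurent extension $\mathbb Z[\Gamma]=\mathbb Z[G]_\alpha[t,t^{-1}]$ what Ranicki provides is only a Wang-type long exact sequence in which the map induced by $1-\alpha_*$ on $L$-groups of $\mathbb Z[G]$ is in general nonzero (already for the Klein bottle group $\mathbb Z\rtimes_{-1}\mathbb Z$ the $L$-groups are not a direct sum of two shifted copies), and moreover the third term of that sequence is an intermediate $L$-group whose decoration is controlled by $(1-\alpha_*)^{-1}(\overline G)\subseteq\tilde K_1(\mathbb Z[G])$; identifying it with $L^h$ uses $Wh(G)=0$. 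The paper's proof proceeds exactly this way: it compares Ranicki's sequence for $L^s_*(\mathbb Z[\Gamma])$ (with the intermediate term identified as $L^h_*(\mathbb Z[G])$) with the Wang sequence for $L^{<-\infty>}_*(\mathbb Z[\Gamma])$ through the forgetful maps and applies the five lemma, using that all decorations coincide for $\mathbb Z[G]$ under the standing hypotheses. Your parenthetical Rothenberg-tower alternative is a genuinely different route, but its crux --- that the $w$-twisted duality involution interchanges the two twisted Nil summands $\widetilde{\mathrm{Nil}}(\mathbb Z[G];\alpha^{\pm1})$ of $Wh(\Gamma)$, $\tilde K_0(\mathbb Z[\Gamma])$ and $K_{-j}(\mathbb Z[\Gamma])$, so that their Tate cohomology vanishes --- is precisely the nontrivial point, and you neither prove it nor give a citation that covers the twisted case with decorations; as written it is a sketch whose key step is open.

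The second statement has a more serious gap: you claim that $Wh(G\times\mathbb Z)=0$ forces the Nil contribution to $Wh(\Gamma)$ to vanish, hence $Wh(\Gamma)=0$. But $Wh(G\times\mathbb Z)=0$ only kills the \emph{untwisted} Nil group $\widetilde{\mathrm{Nil}}(\mathbb Z[G])=NK_1(\mathbb Z[G])$ via the untwisted Bass--Heller--Swan formula, whereas by Farrell--Hsiang the group $Wh(G\rtimes_\alpha\mathbb Z)$ is built (under the standing hypotheses) from the $\alpha$-twisted Nil groups $\widetilde{\mathrm{Nil}}(\mathbb Z[G];\alpha^{\pm1})$, and vanishing of the untwisted Nil does not imply vanishing of the twisted ones. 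So you cannot conclude $Wh(\Gamma)=0$, and the Rothenberg sequence argument for $L^s\cong L^h$ collapses at that point. The paper uses the hypothesis quite differently and never needs $Wh(\Gamma)=0$: it applies Ranicki's sequence to $(G\times\mathbb Z)\rtimes\mathbb Z\cong\Gamma\times\mathbb Z$, where $Wh(G\times\mathbb Z)=0$ is exactly what identifies the intermediate $L$-groups as $L^h_*(\mathbb Z[G\times\mathbb Z])$; then the honest (untwisted) Shaneson splitting of all terms splits this sequence into the previous one for $L^s_*(\mathbb Z[\Gamma])$ plus a Wang sequence for $L^h_*(\mathbb Z[\Gamma])$ with $L^h$- and $L^0$-decorated terms over $\mathbb Z[G]$, and a second application of the five lemma against the $L^{<-\infty>}$ sequence gives $L^h_n(\mathbb Z[\Gamma])\cong L^{<-\infty>}_n(\mathbb Z[\Gamma])$. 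If you want to salvage your route, you would have to either prove the involution-swap statement for twisted Nil groups in all the relevant $K$-degrees, or switch to the sequence-comparison argument.
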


\begin{proof}When $\alpha: G\longrightarrow G$ is trivial, then the result follows immediately from the above well-known fact and the \textit{Shaneson splitting}:
$$L^{<j>}_n(\mathbb Z[G\times\mathbb Z])\cong L_n^{<j>}(\mathbb Z[G])\oplus L_{n-1}^{<j-1>}(\mathbb Z[G]), j\le 1$$
$$L^{<s>}_n(\mathbb Z[G\times\mathbb Z])\cong L_n^{<s>}(\mathbb Z[G])\oplus L_{n-1}^{h}(\mathbb Z[G])$$
which is natural with respect to the natural forgetful maps between them indexed by $s\rightarrow h\rightarrow 0\rightarrow\cdots\rightarrow -\infty$. 

For the general case, we need to make use of the results of Ranicki \cite{RAA1}.
Firstly, there is a long exact sequence \cite[Page 413]{RAA1}:
\begin{align}\label{les1}
\cdots\longrightarrow L^s_n(\mathbb Z[G])\longrightarrow L_n^s(\mathbb Z[\Gamma])\longrightarrow L'_{n-1}(\mathbb Z[G])\longrightarrow L^s_{n-1}(\mathbb Z[G])\longrightarrow\cdots
\end{align}
where $L'_{n-1}(\mathbb Z[G])$ is a certain intermediate $L$-group with torsion lies in $(1-\alpha_*)^{-1}(\overline{G})$, where 
$$1-\alpha_*: \tilde{K}_1(\mathbb Z[G])\longrightarrow \tilde{K}_1(\mathbb Z[G])$$
is the map induced by $\alpha$ and $\overline{G}$ is the image of $G$ in $\tilde{K}_1(\mathbb Z[G])$. 
Now by assumption, $Wh(G)=0$, this implies $\overline{G}=\tilde{K}_1(\mathbb Z[G])$. Hence $(1-\alpha_*)^{-1}(\overline{G})=\tilde{K}_1(\mathbb Z[G])$. Therefore $L'_{n}(\mathbb Z[G])=L^h_{n}(\mathbb Z[G]), \forall n\in\mathbb Z$ and \ref{les1} becomes:
\begin{align}\label{les2}
\cdots\longrightarrow L^s_n(\mathbb Z[G])\longrightarrow L_n^s(\mathbb Z[\Gamma])\longrightarrow L^h_{n-1}(\mathbb Z[G])\longrightarrow L^s_{n-1}(\mathbb Z[G])\longrightarrow\cdots
\end{align}
Now there is also a well-known Wang type long exact sequence for the ultimate $L$-groups (\cite[the proof of Lemma 4.2]{Luck1}):
\begin{align}\label{les3}
\cdots\longrightarrow L^{<-\infty>}_n(\mathbb Z[G])\longrightarrow L_n^{<-\infty>}(\mathbb Z[\Gamma])\longrightarrow L^{-\infty}_{n-1}(\mathbb Z[G])\longrightarrow L^{<-\infty>}_{n-1}(\mathbb Z[G])\longrightarrow\cdots
\end{align}
The natural forgetful maps between these groups give rise to a commutative diagram of the above long exact sequences. By assumption, all these forgetful maps are isomorphisms for the $L$-groups of $\mathbb Z[G]$. Therefore, by the five lemma, $L^s_n(\mathbb Z[\Gamma])\cong L_n^{<-\infty>}(\mathbb Z[\Gamma])$.  

If in addition $Wh(G\times\mathbb Z)=0$, then by replacing $G$ by $G\times\mathbb Z$  in \ref{les1}, where we use the trivial action of $\mathbb Z$ on $\mathbb Z$ (so ($G\times\mathbb Z)\rtimes\mathbb Z\cong\Gamma\times\mathbb Z$),  and use the same argument as we did for $G$, we get
\begin{align}\label{les4}
\cdots\longrightarrow L^s_n(\mathbb Z[G\times\mathbb Z])\longrightarrow L_n^s(\mathbb Z[\Gamma\times\mathbb Z])\longrightarrow L^h_{n-1}(\mathbb Z[G\times\mathbb Z])\longrightarrow L^s_{n-1}(\mathbb Z[G\times\mathbb Z])\longrightarrow\cdots
\end{align}
By Shaneson splitting, the above sequence naturally splits into two long exact sequences, one of which is
\ref{les2} and another one is:
\begin{align}\label{les5}
\cdots\longrightarrow L^h_{n-1}(\mathbb Z[G])\longrightarrow L_{n-1}^h(\mathbb Z[\Gamma])\longrightarrow L^0_{n-2}(\mathbb Z[G])\longrightarrow L^0_{n-2}(\mathbb Z[G])\longrightarrow\cdots
\end{align}
Now as before, \ref{les2}, \ref{les3}, and \ref{les5} together imply the natural isomorphisms  $L^s_n(\mathbb Z[\Gamma])\cong L^h_n(\mathbb Z[\Gamma])\cong L^{<-\infty>}_n(\mathbb Z[\Gamma])$.
\end{proof}

\begin{rem} We cannot conclude isomorphisms for the $L$-groups of $\mathbb Z[\Gamma]$ with other decorations. There is the issue of intermediate $L$-groups. But if we assume $Wh(G\times\mathbb Z^n)=0, \forall n\in\mathbb N$, then all the $L$-groups of $\mathbb Z[\Gamma]$ are naturally isomorphic. 
\end{rem}

\subsection{Proof of Theorem B} In this subsection, we  prove Theorem B first. Theorem A will be a consequence of Theorem B and Lemma \ref{l group}. We will prove the more general version of Theorem B
as explained in Remark \ref{general}.  We firstly treat the $L$-theory case.  Let $\mathcal A$ be a right $G\rtimes_\beta\mathbb Z$ additive category with involution.  View it as a $G$-additive category in the natural way. Since $G$ is torsion free, $G\rtimes_\beta\mathbb Z$ is also torsion free.  This implies the $L$-theoretic assembly map $A_L^{\mathcal A}$ for $G\rtimes_{\beta}\mathbb Z$ in \ref{l assembly} is equivalent to the following assembly map:
$$H^{G\rtimes_\beta\mathbb Z}_n(E(G\rtimes_\beta\mathbb Z); \textbf{L}^{<-\infty>}_{\mathcal A})\rightarrow L^{<-\infty>}_n(\mathcal A_\alpha[G\rtimes_\beta\mathbb Z])$$
where $E(G\rtimes_\beta\mathbb Z)$ is the classifying space for free $G\times_\beta\mathbb Z$-actions.
See \cite[Proposition 66 on page 743]{LR}. 
Compare \cite[Theorem 8.14]{BFL}.  Therefore, by Theorems \ref{obstruction1}, \ref{vanishing}, $A_L^{\mathcal A}$ for $G\times_\beta\mathbb Z$ is an isomorphism if and only if 
$$L^{<-\infty>}_n(\mathcal O^{G\rtimes_\beta\mathbb Z}(E(G\rtimes_\beta\mathbb Z), \mathcal A))=0, \forall n\in\mathbb Z$$ 
Now by Theorem \ref{equivalence}, we have an equivalence of additive categories with involutions
$$\mathcal O^{G\rtimes_\beta\mathbb Z}(E(G\rtimes_\beta\mathbb Z), \mathcal A)\cong\mathcal O^G(E(G\rtimes_\beta\mathbb Z), \mathcal A)_\beta[\mathbb Z]$$
since the action of $G\rtimes_\beta\mathbb Z$ on $E(G\rtimes_\beta\mathbb Z)$ is free.  Denote
$\mathcal O^G(E(G\rtimes_\beta\mathbb Z), \mathcal A)$ by $\tilde{\mathcal A}$. Note that by the characterization properties of classifying spaces, $E(G\rtimes_\beta\mathbb Z)$ is also a model for  the classifying space for free $G$-actions. Now by assumption, the $L$-theoretic FJC holds for $G$ with coefficient in $\mathcal A$, therefore, by Theorems \ref{obstruction1}, \ref{vanishing}, we have 
$$L^{<-\infty>}_n(\tilde{\mathcal A})=0, \forall n\in\mathbb Z$$
Now by applying the Wang type long exact sequence \ref{les3} to $L^{<-\infty>}_*(\tilde{\mathcal A}_\beta[\mathbb Z])$, which is well-known when the coefficient is a ring and can be extended to with twisted coefficient in any additive category (see \cite[proof of Theorem 8.14]{BFL}), we immediately get that 
$$L^{<-\infty>}_n(\tilde{\mathcal A}_\beta[\mathbb Z])=0,\forall n\in\mathbb Z$$
This implies 
$$L^{<-\infty>}_n(\mathcal O^{G\rtimes_\beta\mathbb Z}(E(G\rtimes_\beta\mathbb Z), \mathcal A))=0, \forall n\in\mathbb Z$$
Therefore the $L$-theoretic FJC with coefficient in $\mathcal A$ holds for $G\rtimes_\beta\mathbb Z$. This proves part (1) of Theorem B. 

We now turn into part (2) of Theorem B. When $R$ is regular and $G\rtimes_\beta\mathbb Z$ is torsion free, the $K$-theoretic assembly map $A_K^R$ for $G\rtimes_\beta\mathbb Z$ in \ref{k assembly} is equivalent to 
the following assembly map:
$$H^{G\rtimes_\beta\mathbb Z}_n(E(G\rtimes_\beta\mathbb Z); \textbf{K}_{\mathcal A})\rightarrow K_n(\mathcal A_\alpha[G\rtimes_\beta\mathbb Z])$$
Similar to the argument as in the $L$-theoretic case, we see that the above assembly map is an isomorphism if and only if 
$$K_n(\mathcal O^G(E(G\rtimes_\beta\mathbb Z), R)_\beta[\mathbb Z])=0, \forall n\in\mathbb Z$$
Now, there is a Wang type long exact sequence for the non-connective $K$-theory of $\mathcal O^G(E(G\rtimes_\beta\mathbb Z), R)_\beta[\mathbb Z]$. This is well-known when the coefficient is a ring and has been recently generalized by L\"uck-Steimle \cite[Theorem 0.1, Remark 0.2]{LS} to with coefficient in any additive category. Using this sequence and the assumption that the $K$-theoretic FJC holds for $G$ with coefficient in $R$, we see that
$$K_n(\mathcal O^G(E(G\rtimes_\beta\mathbb Z), R)_\beta[\mathbb Z])\cong NK_n(\mathcal O^G(E(G\rtimes_\beta\mathbb Z), R);\beta)\oplus NK_n(\mathcal O^G(E(G\rtimes_\beta\mathbb Z), R);\beta)$$
Denote the Nil-groups $NK_n(\mathcal O^G(E(G\rtimes_\beta\mathbb Z), R);\beta)$ as defined in \cite{LS}
by $Nil_{n, R}^{G\rtimes_\beta\mathbb Z}$, then 
$$K_n(\mathcal O^G(E(G\rtimes_\beta\mathbb Z), R)_\beta[\mathbb Z])=0$$ if and only if
$Nil_{n, R}^{G\rtimes_\beta\mathbb Z}=0$. This proves part (2) of Theorem B.

\subsection{Proof of Theorem A} We now prove Theorem A. Let us assume $G\in\mathcal{FJ}$.  If $G$ is not torsion free, then there is nothing to prove, since it cannot be realized as the fundamental group of a closed aspherical manifold. So let us assume $G$ is torsion free. Since $G$ satisfies the $K$-theoretic FJC with coefficient in $\mathbb Z$, it follows that 
$$Wh(G)=0,\ \ \tilde{K_0}(\mathbb Z[G])=0,\ \ K_i(\mathbb Z[G])=0, i\le -1$$
See \cite[Conjecture 1 on page 708, Conjecture 3 on page 710]{LR}.  Therefore Lemma \ref{l group} applies and we have, for every orientation map $w: G\rtimes\mathbb Z\longrightarrow\{\pm\}$, natural isomorphisms
\begin{align}\label{s-infinity}
L^s_n(\mathbb Z[G\rtimes\mathbb Z])\cong L^{<-\infty>}_n(\mathbb Z[G\rtimes\mathbb Z]), \forall n\in\mathbb Z
\end{align}
Because $G$ is torsion free and satisfies the $L$-theoretic FJC with coefficient in $\mathbb Z$, Theorem B applies and the $L$-theoretic FJC with coefficient in $\mathbb Z$ holds for $G\rtimes\mathbb Z$. 
Therefore, by a standard surgery long exact sequence argument, we see that the simple Borel conjecture holds for $G\rtimes\mathbb Z$. For the convenience of the reader, let us sketch the main idea, more details can be found in \cite[Theorem 28 on page 723]{LR},  \cite[pages 26-28]{FRR} .
Let $M$ be a closed aspherical manifold of dimension $n\ge 5$ with $\pi_1(M)\cong G\rtimes\mathbb Z$.
Let $S^{s}(M)$ be its simple topological structure set (an abelian group indeed). It consists of the equivalence classes of all simple homotopy equivalence $f: N\longrightarrow M$,  from another closed  manifold $N$ to $M$. Two such maps $f: N\longrightarrow M,\ f': N'\longrightarrow M$ are equivalent if there is a homeomorphism $g: N\longrightarrow N'$ so that $f'\circ g$ is homotopic to $f$.  Therefore the simple Borel conjecture holds for $G$ if and only if $S^s(M)$ consists of one point. Now there is a surgery
long exact sequence for $S^s(M)$:
$$\xymatrix{
\cdots\ar[r]&\mathcal N_{n+1}(M)\ar[r]^{\sigma_{n+1}\ \ \ \ }&L^s_{n+1}(\mathbb Z[G\rtimes\mathbb Z])\ar[r]^{\ \ \ \ \ \partial_{n+1}}&S^s(M)\ar[r]^{\eta_n\ }&\mathcal N_n(M)\ar[r]^{\sigma_n\ \ \ \ }& L^s_n(\mathbb Z[G\rtimes\mathbb Z])
}
$$
For each $i\ge n$, the assembly map \ref{l assembly} and the above sequence fit into the following commutative diagram
$$
\xymatrix{ 
\mathcal N_i(M) \ar[r]^{\sigma_i}\ar[d]^{p_i} & L^s_i(\mathbb Z[G\rtimes\mathbb Z])\ar[d]^{id}\\
H_i(M; \textbf{L}^s_{\mathbb Z})\ar[d]^{q_i}\ar[r] & L_i^s(\mathbb Z[G\rtimes\mathbb Z])\ar[d]^{f_i}\\
H_i^{G\rtimes\mathbb Z}(E(G\rtimes\mathbb Z); \textbf{L}^{<-\infty>}_\mathbb Z)\ar[r]^{\ \ \ \ A_L^{\mathbb Z}} & L^{<-\infty>}_i(\mathbb Z[G\rtimes\mathbb Z])
}
$$
We have showed that $f_i$ and $A^{\mathbb Z}_L$ are isomorphisms for all $i\ge n$. 
$q_i$ is also an isomorphism since 
$$H_i^{G\rtimes\mathbb Z}(E(G\rtimes\mathbb Z); \textbf{L}^{<-\infty>}_\mathbb Z)\cong H_i(M; \textbf{L}_\mathbb Z^{<-\infty>})\cong H_i(M; \textbf L^s_{\mathbb Z})$$
where the first isomorphism comes from,  by viewing $M=B(G\rtimes\mathbb Z)=E(G\rtimes\mathbb Z)/G\rtimes\mathbb Z$,  the induction structure of an equivariant homology, see \cite[Remark 61 on page 736]{LR},  and the second isomorphism is because the two homology theories $H_*(-; \textbf{L}^s_\mathbb Z), H_*(-; \textbf L^{<-\infty>}_\mathbb Z)$ are naturally isomorphic (since they are naturally isomorphic on a point). 
Now it is a fact from surgery theory that $p_i$ is injective when $i=n$, and bijective when $i>n$. 
This implies that  $\sigma_n$ is injective and $\sigma_{n+1}$ is bijective. Therefore $S^s(M)$ is trivial and the simple Borel conjecture holds for $G\rtimes\mathbb Z$. 

If in addition $Wh(G\times\mathbb Z)=0$, then by Lemma \ref{l group}, we have 
\begin{align}\label{sh-infinity}
L^h_n(\mathbb Z[G\rtimes\mathbb Z])\cong L^{<-\infty>}_n(\mathbb Z[G\rtimes\mathbb Z]), \forall n\in\mathbb Z
\end{align}
This implies, by using the surgery long exact sequence for the structure set $S^h(M)$, which consists of $h$-cobordant equivalence classes of all homotopy equivalent maps to $M$,   and a similar argument as above, that $S^h(M)$ consists of a single element.  This implies the bordism Borel conjecture holds for $G\rtimes\mathbb Z$.  This completes the proof of Theorem A.

\frenchspacing

\bibliographystyle{plain}
\bibliography{kun}

\end{document}